\newcommand{\nc}{\newcommand}
\nc{\les}{\lesssim}
\nc{\nit}{\noindent}
\nc{\nn}{\nonumber}
\nc{\D}{\partial}
\nc{\diff}[2]{\frac{d #1}{d #2}}
\nc{\diffn}[3]{\frac{d^{#3} #1}{d {#2}^{#3}}}
\nc{\pdiff}[2]{\frac{\partial #1}{\partial #2}}
\nc{\pdiffn}[3]{\frac{\partial^{#3} #1}{\partial{#2}^{#3}}}
\nc{\abs}[1] {\lvert #1 \rvert}
\nc{\cAc}{{\cal A}_c}
\nc{\cE}{{\cal E}}
\nc{\cF}{{\cal F}}
\nc{\cP}{{\cal P}}
\nc{\cV}{{\cal V}}
\nc{\cQ}{{\cal Q}}
\nc{\cGin}{{\cal G}_{\rm in}}
\nc{\cGout}{{\cal G}_{\rm out}}
\nc{\cO}{{\cal O}}
\nc{\Lav}{{\cal L}_{\rm av}}
\nc{\cL}{{\cal L}}
\nc{\cB}{{\cal B}}
\nc{\cZ}{{\cal Z}}
\nc{\cR}{{\cal R}}
\nc{\cT}{{\cal T}}
\nc{\cY}{{\cal Y}}
\nc{\cX}{{\cal X}}
\nc{\cXT}{{{\cal X}(T)}}
\nc{\cBT}{{{\cal B}(T)}}
\nc{\vD}{{\vec \mathcal{D}}}
\nc{\efield}{\mathcal{E}}
\nc{\vE}{{\vec \efield}}
\nc{\vB}{{\vec \mathcal{B}}}
\nc{\vH}{{\vec \mathcal{H}}}
\nc{\mR}{\mathcal R}
\nc{\mF}{\mathcal F}
\nc{\ty}{{\tilde y}}
\nc{\tu}{{\tilde u}}
\nc{\tV}{{\tilde V}}
\nc{\Pc}{{\bf P_c}}
\nc{\bx}{{\bf x}}
\nc{\bX}{{\bf X}}
\nc{\bXYZ}{{\bf XYZ}}
\nc{\bY}{{\bf Y}}
\nc{\bF}{{\bf F}}
\nc{\bS}{{\bf S}}
\nc{\dV}{{\delta V}}
\nc{\dE}{{\delta E}}
\nc{\TT}{{\Theta}}
\nc{\dPsi}{{\delta\Psi}}
\nc{\order}{{\cal O}}
\nc{\Rout}{R_{\rm out}}
\nc{\eplus}{e_+}
\nc{\eminus}{e_-}
\nc{\epm}{e_\pm}
\nc{\sgn}{\text{sgn}}
\nc{\eps}{\varepsilon}
\nc{\vnabla}{{\vec\nabla}}
\nc{\G}{\Gamma}
\nc{\w}{\omega}
\nc{\mh}{h}
\nc{\mg}{g}
\nc{\vphi}{\varphi}
\nc{\tlambda}{\tilde\lambda}
\nc{\be}{\begin{equation}}
\nc{\ee}{\end{equation}}
\nc{\ba}{\begin{eqnarray}}
\nc{\ea}{\end{eqnarray}}
\nc{\g}{\gamma}
\nc{\ol}{\overline}
\newtheorem{theorem}{Theorem}[section]
\newtheorem{lemma}[theorem]{Lemma}
\newtheorem{prop}[theorem]{Proposition}
\newtheorem{corollary}[theorem]{Corollary}
\nc{\pT}{\partial_T}
\nc{\pz}{\partial_z}
\nc{\pt}{\partial_t}
\nc{\la}{\langle}
\nc{\ra}{\rangle}
\nc{\infint}{\int_{-\infty}^{\infty}}
\nc{\halfwidth}{6.5cm}
\nc{\figwidth}{10cm}
\newcommand{\f}{\frac}
\nc{\nlayers}{L} \nc{\nsectors}{M}
\nc{\indicator}{\mathbf{1}}
\nc{\Rhole}{R_{\rm hole}}
\nc{\Rring}{R_{\rm ring}}
\nc{\neff}{n_{\rm eff}}
\nc{\Frem}{F_{\rm rem}}
\nc{\R}{\mathbb R}
\nc{\mJ}{\mathcal J}
\nc{\C}{\mathbb C}
\nc{\Z}{\mathbb Z}
\nc{\N}{\mathbb N}
\nc{\DD}{\Delta}
\nc{\cD}{\mathcal D}
\nc{\lnorm}{\left\|}
\nc{\rnorm}{\right\|}
\nc{\rnormp}{\right\|_{\ell^{p,\eps}}}
\nc{\rar}{\rightarrow} 
\begin{document}
	
	\begin{abstract}
		
		We consider the higher order Schr\"odinger operator $H=(-\Delta)^m+V(x)$ in $n$ dimensions with real-valued potential $V$ when $n>4m-1$, $m\in \mathbb N$.  We show that for any $\frac{2n}{n-4m+1}<p\leq \infty$ and $0\leq \alpha <\frac{n+1}{2}-2m-\frac{n}p$, there exists a real-valued, compactly supported potential $V\in C^{\alpha}(\R^n)$  for which the wave operators $W^{\pm}$ are not bounded on $L^p(\R^n)$.  As a consequence of our analysis we show that the wave operators for the usual second order Schr\"odinger operator $-\Delta+V$ are unbounded on $L^p(\R^n)$ for $n>3$ and $\frac{2n}{n-3}<p\leq \infty$ for insufficiently differentiable potentials $V$, and show a failure of   $L^{p'}\to L^p$ dispersive estimates that may be of independent interest.

	\end{abstract}

	\title[Wave operators for higher order  Schr\"odinger operators]{\textit{Counterexamples to $L^p$ boundedness of wave operators for classical and higher order Schr\"odinger operators } } 
	
	\author[M.~B. Erdo\smash{\u{g}}an, W.~R. Green]{M. Burak Erdo\smash{\u{g}}an, Michael Goldberg and William~R. Green}
	\thanks{  The first author was partially supported by the NSF grant  DMS-2154031 and Simons Foundation Grant 634269.  The second author is partially supported by Simons Foundation
		Grant 635369. The third author is partially supported by Simons Foundation
Grant 511825. }
	\address{Department of Mathematics \\
		University of Illinois \\
		Urbana, IL 61801, U.S.A.}
	\email{berdogan@illinois.edu}
	\address{Department of Mathematics\\
		University of Cincinnati \\
		Cincinnati, OH 45221 U.S.A.}
	\email{goldbeml@ucmail.uc.edu}
	\address{Department of Mathematics\\
		Rose-Hulman Institute of Technology \\
		Terre Haute, IN 47803, U.S.A.}
	\email{green@rose-hulman.edu}

	\maketitle

	\section{Introduction}
	
	We consider the higher order Schr\"odinger equation 
	\begin{align*}
	i\psi_t =(-\Delta)^m\psi +V\psi, \qquad x\in \R^n,  \quad  m\in \mathbb N,
	\end{align*}
	with a real-valued and decaying potential  $V$.  We denote the free higher order Schr\"odinger operator by $H_0=(-\Delta)^m$ and the perturbed operator by $H=(-\Delta)^m+V(x)$. We study the  wave operators, which are defined by
	$$
	W_{\pm}=s\text{\ --}\lim_{t\to \pm \infty} e^{itH}e^{-itH_0}.
	$$
	Recent work by the first and third authors, \cite{EGWaveOp}, showed that for $m>1$ and $n > 2m$ the wave operators extend to bounded operators on $L^p(\R^n)$ for $1\leq p\leq \infty$ for sufficiently smooth small potentials. The case $m=1$ was established by Yajima in \cite{YajWkp1}. Here we show that when $n>4m-1$ and $\frac{2n}{n-4m+1}<p\leq \infty$ the $L^p$ boundedness of the wave operators may fail even for compactly supported continuous potentials if the potential is not sufficiently smooth.   Our main result is the following.
	
	\begin{theorem}\label{thm:main}
		
		Fix $m\in \mathbb N$,  let $n>4m-1$ and $\frac{2n}{n-4m+1}<p\leq \infty$, for all $0\leq \alpha<\frac{n+1}{2}-2m-\frac{n}{p}$  there exists a real-valued compactly supported potential of class $C^{\alpha}(\R^n)$  for which the wave operators $W_{\pm}$ are not bounded on $L^{p}(\R^n)$.		
		
	\end{theorem}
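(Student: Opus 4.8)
We outline the proof. The plan is to extract the first Born term of $W_{\pm}$, show it governs the question, and then break its $L^p$ boundedness with a rough potential; the dispersive failure will come out as a by‑product.

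\medskip
\noindent\emph{Step 1: reduction to the first Born term.} Writing $R_0^{\pm}(\lambda)=((-\Delta)^m-\lambda\mp i0)^{-1}$, I will start from the stationary representation
\[
W_{-}-I=-\frac{1}{2\pi i}\int_0^{\infty}R_V^{-}(\lambda)\,V\,\bigl[R_0^{+}(\lambda)-R_0^{-}(\lambda)\bigr]\,d\lambda,\qquad R_V^{-}(\lambda)=\bigl((-\Delta)^m+V-\lambda+i0\bigr)^{-1},
\]
and expand $R_V^{-}$ in its Born series, so that $W_{-}-I=\sum_{k\ge 1}\Omega_k$. Since replacing $V$ by $\varepsilon V$ changes neither the class $C^{\alpha}$ nor the conclusion, I may assume $\|V\|$ small; then, following the scheme of \cite{EGWaveOp} and using that each free resolvent supplies $2m$ orders of smoothing while $n>4m-1$, the tail $\sum_{k\ge 2}\Omega_k$ is bounded on $L^p(\R^n)$. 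This reduces everything to producing $V\in C^{\alpha}_c(\R^n)$ for which the first Born term $\Omega_1$ is unbounded on $L^p(\R^n)$.

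\medskip
\noindent\emph{Step 2: structure of $\Omega_1$.} On the Fourier side $\widehat{\Omega_1 f}(\xi)=c_{m,n}\int_{\R^n}(|\xi|^{2m}-|\eta|^{2m}-i0)^{-1}\widehat V(\xi-\eta)\widehat f(\eta)\,d\eta$. Using the partial–fraction decomposition of $((-\Delta)^m-\lambda)^{-1}$ — only the root lying on the spectrum is singular, the others being exponentially localized — together with the stationary‑phase expansion of the $(-\Delta)$‑resolvent kernel, $\Omega_1$ reduces, modulo a harmless low‑energy/Riesz‑potential piece, to an oscillatory integral operator
\[
\Omega_1 f(x)\approx\int_{\R^n}\!\!\int_{\R^n}\frac{V(z)\,f(y)}{|x-z|^{n-2}\,|z-y|^{(n-1)/2}}\;\Psi\!\bigl(|x-z|-|z-y|\bigr)\,dz\,dy,
\]
where $\Psi$ has a homogeneous singularity, of order fixed by $m$ and $n$, across the bisector set $\{|x-z|=|z-y|\}$. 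Taming this singularity amounts to transferring derivatives onto $V$ and $f$; as $f$ ranges over all of $L^p$, the genuine obstruction is the regularity of $V$. A careful accounting — decomposing dyadically in $|\xi|\sim|\eta|\sim\rho$ and converting, via the sharp $L^2$‑based Stein–Tomas extension estimate on $S^{n-1}$ (whose exponent range is exactly where $n>4m-1$ is used), the spherical geometry concealed in $\Psi$ into $L^p$ bounds — should show that $\Omega_1$ is bounded on $L^p$ precisely when $\widehat V(\zeta)=o\bigl(|\zeta|^{-(\frac{n+1}{2}-2m-\frac np)}\bigr)$ as $|\zeta|\to\infty$.

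\medskip
\noindent\emph{Step 3: the potential, the test functions, and the dispersive corollary.} To break boundedness I will fix a real even $\chi\in C^{\infty}_c(\R^n)$, a lacunary sequence $\rho_j=2^{j}$, and a number $0<\delta<\frac{n+1}{2}-2m-\frac np-\alpha$ (the interval is nonempty exactly because $p>\frac{2n}{n-4m+1}$ and $\alpha<\frac{n+1}{2}-2m-\frac np$), and set
\[
V(x)=\sum_{j\ge 1}\rho_j^{-\alpha-\delta}\,\chi(x)\,\cos(\rho_j x_1).
\]
Then $V$ is real‑valued, compactly supported, and of class $C^{\alpha}(\R^n)$ (the $j$‑th term has $C^{\alpha}$ norm $\lesssim\rho_j^{-\delta}$ and the frequencies are lacunary), while $|\widehat V(\zeta)|\sim\rho_j^{-\alpha-\delta}$ for $|\zeta|\sim\rho_j$. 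For each $j$ I take $f_j$ whose Fourier transform is (a smoothed indicator of) the plate inside $\{|\eta|=\rho_j\}$ attached to the resonance set $\{\xi\cdot e_1=\tfrac12\rho_j,\ \eta\cdot e_1=-\tfrac12\rho_j,\ \xi'=\eta'\}$ of the $j$‑th block, so that $\|f_j\|_{L^p}$ is computed exactly from the dimensions of the plate. Propagating $f_j$ through $\Omega_1$ — the resonant denominator contributes the factor $\rho_j^{-(2m-1)}(\xi\cdot e_1-\tfrac12\rho_j-i0)^{-1}$, and extension off the plate supplies the Stein–Tomas power of $\rho_j$ — should give $\|\Omega_1 f_j\|_{L^p}\gtrsim\rho_j^{\frac{n+1}{2}-2m-\frac np-\alpha-\delta}\|f_j\|_{L^p}$, an exponent made positive by the choice of $\delta$; hence $\Omega_1$, and with it $W_{\pm}$, is unbounded on $L^p(\R^n)$. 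Finally, since $e^{-itH}P_{\mathrm{ac}}=W_{-}e^{-itH_0}W_{-}^{*}$, the same $V$ forces $\|e^{-itH}P_{\mathrm{ac}}\|_{L^{p'}\to L^p}$ to violate the free decay rate $|t|^{-\frac{n}{2m}(1-\frac2p)}$, which is the advertised failure of $L^{p'}\to L^p$ dispersive estimates.

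\medskip
\noindent\emph{Main obstacle.} The hard part is Step 2: rigorously peeling $\Omega_1$ down to its singular oscillatory core (controlling the low‑energy and principal‑value contributions), and, above all, pinning down the sharp power of $\rho$ in the $L^p\to L^p$ norm of the frequency‑$\rho$ piece of $\Omega_1$, so that the upper bound and the lower bound produced by the plates $f_j$ match and single out the critical exponent $\frac{n+1}{2}-2m-\frac np$ exactly. A secondary point is that the reduction in Step 1 must be carried through for merely Hölder‑continuous $V$, i.e. one must check that the smoothing in $\Omega_k$, $k\ge 2$, genuinely dominates the roughness of $V$ across the whole range $n>4m-1$.
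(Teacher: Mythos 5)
Your overall strategy differs from the paper's: you attack the wave operator directly through its Born series, whereas the paper never tries to bound or unbound individual Born terms of $W_\pm$ on $L^p$. Instead it shows that $L^p$ boundedness of $W_\pm$ would, via the intertwining identity, force a frequency--localized dispersive estimate $t^{n/2}\|H^{\frac{n(m-1)}{2m}}e^{itH}\psi(H/L^{2m})P_{ac}(H)\|_{1\to\infty}\lesssim 1$ (and its $L^p\to L^{p'}$ analogue), and then disproves that estimate at $t=L^{-(2m-1)}$, $L\to\infty$, using the resonant radial potential $V_L(z)=\cos\bigl(c_mL|z|^{2m/(2m-1)}\bigr)\rho_\delta(z)$ together with the uniform boundedness principle. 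The difference matters, because your Step 1 contains a genuine gap: the claim that ``each free resolvent supplies $2m$ orders of smoothing'' and hence $\sum_{k\ge2}\Omega_k$ is bounded on $L^p$ for merely H\"older $V$ is exactly what fails in the regime $n>4m-1$. There the resolvent kernel grows like $\lambda^{\frac{n+1}{2}-2m}$ in the spectral variable, so \emph{every} low-order Born term inherits a growth in $\lambda$ that must be paid for by smoothness of $V$ (this is why the hypotheses of \cite{EGWaveOp} in case $n>4m-1$ are $\mathcal F L^r$ conditions on $V$ rather than decay conditions); only after roughly $\lfloor n/4m\rfloor+1$ iterations does decay alone control the tail. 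Smallness of $V$ does not repair this, since the obstruction is $\lambda$-growth, not convergence of the series. The paper's polynomial-in-$\theta$ device (writing $V=\theta W$, noting $P_{L,t}(\theta)=\sum_k a_k\theta^k$ is bounded at $2M+1$ values of $\theta$, hence each coefficient is bounded) exists precisely to isolate the first Born term \emph{without} ever proving boundedness of $\Omega_2,\dots,\Omega_{2M-1}$; your reduction needs those bounds and does not have them.

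Two secondary points. First, your Step 2 asserts a sharp two-sided characterization of $L^p$ boundedness of $\Omega_1$ in terms of pointwise decay of $\widehat V$; this is far stronger than anything the paper proves or needs, and the key estimates in Steps 2--3 are left as ``should show/should give,'' including the exponent count that is the entire content of the counterexample. Your lacunary potential $\sum_j\rho_j^{-\alpha-\delta}\chi(x)\cos(\rho_j x_1)$ with Knapp-plate test functions is a plausible alternative to the paper's radial modulation tuned to the stationary-phase point $\lambda_0=(R/2m)^{1/(2m-1)}$, but as written it is a program, not a proof. Second, your closing inference is backwards relative to the paper: unboundedness of $W_-$ on $L^p$ does not by itself imply failure of the $L^{p'}\to L^p$ dispersive estimate, since $e^{-itH}P_{ac}=W_-e^{-itH_0}W_-^*$ could be bounded even when $W_-$ is not. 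The paper's logic runs in the opposite (valid) direction: dispersive failure plus intertwining yields unboundedness of the wave operators.
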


	For the convenience of the reader, in Section~\ref{sec:fail} we prove Theorem~\ref{thm:main} when $p=\infty$, then adapt the argument in Section~\ref{sec:ext to p} to the remaining cases of $\frac{2n}{n-4m+1}<p< \infty$.  By considering frequency-localized dispersive estimates, we provide a direct argument for arbitrary integer order $m$ that simplifies the $m=1$ argument for the dispersive bounds in \cite{GV} and extends to $m\geq 1$.  
	
	For comparison, the results on boundedness in \cite{EGWaveOp} require some smoothness on the potential when $n\geq 4m-1$.  Writing $\la x\ra$ to denote $  (1+|x|^2)^{\f12}$, $\mathcal F(f)$ or $\widehat f$ to denote the Fourier transform of $f$ and defining the norm $\|f\|_{H^{\delta}}=\|\la \cdot \ra^{\delta} \widehat f(\cdot)\|_2$, we recall the relevant statements  below.  

	\begin{theorem}[Theorem~1.1 in \cite{EGWaveOp}]
		Let $n>2m$. 
		Assume that the $V$ is a real-valued potential on $\R^n$ and   fix $0<\delta\ll 1$. Then $\exists C=C(\delta,n,m)>0 $ so that the wave operators  extend   to   bounded operators on $L^p(\R^n)$ for all $1\leq p\leq \infty$, provided that 
		\begin{enumerate}[i)]
			
			\item $\big\| \la \cdot \ra^{\frac{4m+1-n}{2}+\delta} V(\cdot)\big\|_{2}<C$   when $2m<n<4m-1$,
			
			\item $\big\|\la \cdot \ra^{1+\delta}V(\cdot)\big\|_{H^{\delta}}<C$ when $n=4m-1$,
			
			\item  $\big\|\mathcal F(\la \cdot \ra^{\sigma} V(\cdot))\big\|_{L^{ \frac{n-1-\delta}{n-2m-\delta} }}<C$ for some $\sigma>\frac{2n-4m}{n-1-\delta}+\delta$   when $n>4m-1$.
			
		\end{enumerate}
		
	\end{theorem}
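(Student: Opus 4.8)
\medskip
\noindent\textbf{Proof sketch.}
The plan is to realize $W_\pm$ through its stationary representation, expand it in a Born series, and estimate each term on $L^1$ and on $L^\infty$, the remaining exponents following by interpolation with the $L^2$ bound and by summing the series; the scheme is that of Yajima \cite{YajWkp1} for $m=1$. For the limiting absorption principle one factors $\xi^{2m}-\lambda^{2m}=\prod_{j=0}^{m-1}(\xi^2-e^{2\pi ij/m}\lambda^2)$, so that partial fractions give
\[
R_0(\lambda^{2m}\pm i0)=c_m\,\lambda^{2-2m}\sum_{j=0}^{m-1}c_j\,R_{-\DD}\big(e^{2\pi ij/m}\lambda^2\big),
\]
where $R_{-\DD}$ is the free Schr\"odinger resolvent and the $\pm$ on the left affects only the $j=0$ summand (the others being evaluated off the real axis). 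Thus the limiting absorption principle for $H_0=(-\DD)^m$, and the structure of its oscillatory singularity, follow from those of $-\DD$. Since the smallness assumed in i)--iii) rules out zero-energy resonances and eigenvalues as well as positive eigenvalues of $H$, the perturbed resolvent also obeys the limiting absorption principle; inserting $R_V^\pm=R_0^\pm-R_0^\pm V R_V^\pm$ into the standard stationary formula for $W_\mp$ produces $W_\mp=\sum_{k\ge0}W_\mp^{(k)}$ with $W_\mp^{(0)}=I$ and $W_\mp^{(k)}$ carrying $k$ copies of $V$ and $k+1$ free resolvents.

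The tail $\sum_{k\ge2}W_\mp^{(k)}$ I would handle crudely: the weighted resolvent bounds from the limiting absorption principle together with the control of $V$ in i)--iii) give $\|W_\mp^{(k)}\|_{L^1\to L^1}+\|W_\mp^{(k)}\|_{L^\infty\to L^\infty}\le(C_0\|V\|_X)^k$ for some $C_0=C_0(n,m,\delta)$, so that $C<1/C_0$ makes the tail summable (the single term $k=2$ may require a slightly more careful, though still $\lambda$-absolutely-convergent, treatment). Everything thus rests on the first Born term
\[
W_\mp^{(1)}=c_{n,m}\int_0^\infty\lambda^{2m-1}\,R_0^\pm(\lambda^{2m})\,V\,\operatorname{Im}\!\big[R_0^+(\lambda^{2m})\big]\,d\lambda .
\]
Writing the two resolvents in Fourier variables and using $\operatorname{Im}\big[(|\eta|^{2m}-\lambda^{2m}-i0)^{-1}\big]=\frac{\pi}{2m\lambda^{2m-1}}\,\delta(|\eta|-\lambda)$ to collapse the $\lambda$-integral, $W_\mp^{(1)}$ is the operator with Fourier kernel $K(\xi,\eta)=c\,\widehat V(\xi-\eta)\,(|\xi|^{2m}-|\eta|^{2m}\mp i0)^{-1}$, which reduces to Yajima's multiplier when $m=1$. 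Passing to the momentum transfer $\zeta=\xi-\eta$ and using $|\eta+\zeta|^{2m}-|\eta|^{2m}=(2\eta\cdot\zeta+|\zeta|^2)\,\Phi_m(\eta,\zeta)$ with $\Phi_m>0$ of size $|\eta|^{2m-2}$ for $|\eta|\gg|\zeta|$, one exhibits $W_\mp^{(1)}$ as $c\int\widehat V(\zeta)\,e^{i\zeta\cdot x}\,(G_\zeta^\mp * f)(x)\,d\zeta$, where $G_\zeta^\mp$ has multiplier $\Phi_m(\eta,\zeta)^{-1}(2\eta\cdot\zeta+|\zeta|^2\mp i0)^{-1}$: an operator of order $2-2m$ (a fractional integration of order $2m-2$ at high frequency) composed with a modulated one-dimensional Hardy-type projection in the direction $\widehat\zeta$.

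The $L^1$ and $L^\infty$ bounds now split into the three regimes. When $2m<n<4m-1$ the smoothing order $2m-2$ strictly exceeds $\tfrac{n-3}{2}$, which is exactly enough that the composed free-resolvent kernels define $L^1$- and $L^\infty$-bounded operators after pairing against $V$ by Cauchy--Schwarz, the weight $\la x\ra^{(4m+1-n)/2+\delta}$ being calibrated to the slow spatial decay of those kernels. When $n=4m-1$ the inequality degenerates to equality and the resulting endpoint loss is absorbed by the extra $H^\delta$ regularity of $\la x\ra^{1+\delta}V$. When $n>4m-1$ the smoothing no longer helps and one instead exploits decay of $\widehat V$: estimating the $L^1/L^\infty$ operator norm of the $\zeta$-slice of $W_\mp^{(1)}$ and integrating in $\zeta$ by H\"older against $\mathcal F(\la\cdot\ra^\sigma V)$ (the weight $\la\cdot\ra^\sigma$ traded, as usual, against a fast-decaying convolution kernel so that $\widehat V$ is controlled), the exponent $q=\frac{n-1-\delta}{n-2m-\delta}$ and the threshold $\sigma>\frac{2n-4m}{n-1-\delta}+\delta$ are precisely the H\"older-conjugate data making the $\zeta$-integral converge. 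Interpolating the $L^1$ and $L^\infty$ bounds with $L^2$ and summing the Born series finishes the argument.

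The step I expect to be the main obstacle is the endpoint case $p\in\{1,\infty\}$ of $W_\mp^{(1)}$ for $n\ge4m-1$: there the modulated one-dimensional Hardy projection inside $G_\zeta^\mp$ is genuinely unbounded on $L^\infty$, so any estimate that discards the oscillation $e^{i\zeta\cdot x}$ or moves the absolute value inside the $\zeta$-integral must fail; one is forced to track the sharp $|\zeta|$-dependence of the operator norm of each frequency slice and trade it against the regularity and decay of $V$, and it is exactly this bookkeeping that fixes the thresholds in i)--iii), which the counterexamples of Theorem~\ref{thm:main} show to be essentially sharp. A secondary, lower-order nuisance is the low-frequency behavior of the resolvents: for $m\ge2$ the naive factorization manufactures a spurious $\lambda^{2-2m}$ singularity at $\lambda=0$, but this cancels on summing over the roots of unity (equivalently it is absorbed because $n>2m$), so low frequencies pose no genuine obstruction.
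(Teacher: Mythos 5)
This statement is not proved in the present paper at all: it is quoted verbatim (as Theorem~1.1 of \cite{EGWaveOp}) purely for comparison with the counterexample, so the only proof to compare against is the one in the cited reference. Your overall architecture does match that proof's strategy --- stationary representation of $W_\pm$, expansion into a Born series summed by smallness of $V$, the splitting identity reducing $((-\Delta)^m-\lambda^{2m})^{-1}$ to second order resolvents at the $m$-th roots of unity, and a separate treatment of the three dimensional regimes --- and your reduction of the first Born term to the multiplier $\widehat V(\xi-\eta)(|\xi|^{2m}-|\eta|^{2m}\mp i0)^{-1}$, with the factorization $|\eta+\zeta|^{2m}-|\eta|^{2m}=(2\eta\cdot\zeta+|\zeta|^2)\Phi_m(\eta,\zeta)$, is the correct higher order analogue of Yajima's structure. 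The cited proof works instead with $\lambda$-integrals of explicit resolvent kernel expansions ($e^{i\lambda|x-y|}$ times symbols of order $\frac{1-n}{2}$) and shows each Born term has an admissible kernel, but that is a difference of bookkeeping rather than of substance.

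The genuine gaps are at the two places where the theorem's hypotheses are actually earned. First, your dismissal of the terms $k\ge 2$ as ``crude'' consequences of the limiting absorption principle would fail as stated: weighted $L^{2,s}\to L^{2,-s'}$ resolvent bounds do not yield $L^1\to L^1$ or $L^\infty\to L^\infty$ bounds for the resulting operators (one needs pointwise kernel bounds, spatial decay in $x$ and $y$, and enough integrations by parts in $\lambda$ to make the frequency integral converge without any time oscillation to help), and in the regime $n\ge 4m-1$ each additional resolvent pair contributes growth of order $\lambda^{\frac{n+1}{2}-2m}$, so every iterated term requires the same smoothness input on $V$ as the first --- this is precisely why the $\mathcal F L^{\frac{n-1-\delta}{n-2m-\delta}}$ norm appears multiplicatively, with the $k$-th term bounded by $C^k$ times that norm to the $k$-th power, and it is what makes the series summable for small norm. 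Second, the estimate of $W^{(1)}_\mp$ in cases ii) and iii), from which the exponents $\sigma>\frac{2n-4m}{n-1-\delta}+\delta$ and $q=\frac{n-1-\delta}{n-2m-\delta}$ are supposed to emerge, is only described, and you yourself flag it as ``the main obstacle'': the passage from the frequency-sliced Hardy-projection picture to a uniform $L^1/L^\infty$ bound, with the sharp $|\zeta|$-dependence of each slice traded against the decay and regularity of $\widehat V$, is the heart of the cited proof and is not carried out here. As a result the proposal identifies the right skeleton but leaves unproved exactly the steps that distinguish the three hypotheses i)--iii), and the shortcut it offers for the higher Born terms is not viable in the dimensions $n\ge 4m-1$ where the theorem is delicate.
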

	See \cite{EGWaveOp} for other statements that remove the smallness requriement on the potential. 	The arbitrarily small differentiability assumption when $n=4m-1$ appears to be an artifact of the method in \cite{EGWaveOp}.
	We note that the norm used when $n>4m-1$ is finite when $\la x\ra^{\sigma}V(x)$ has more than $\frac{n}{n-1}(\frac{n+1}{2}-2m)$    derivatives in $L^2(\R^n)$.  We   believe this requirement on smoothness of the potential is not sharp, in light of the counterexample constructed here.  When $m=1$ the counterexample in \cite{GV} and the positive result of the first and third authors in \cite{EG} show that $\alpha=\frac{n-3}{2}$ is sharp for $L^1\to L^\infty$ dispersive estimates, at least when $n=5,7$.  In effect, our result shows that one cannot expect decay of a class of potentials alone suffice to ensure the boundedness of the wave operators as is the case in lower dimensions, when $2m<n<4m-1$ above.  Instead, one also expects a degree of differentiability on the potential is needed to ensure boundedness in high dimensions.

	The potentials considered here also suffice to imply, see for example \cite{Sche,agmon,ScheArb}, the existence, $L^2$-boundedness, asymptotic completeness, and intertwining identity for the wave operators.
	In particular, we have the identity
	\begin{align}\label{eq:intertwining}
	f(H)P_{ac}(H)=W_\pm f((-\Delta)^m)W_{\pm}^*.
	\end{align}
	Here $P_{ac}(H)$ is the projection onto the absolutely continuous spectral subspace of $H$, and $f$ is any Borel function.  One use of \eqref{eq:intertwining} is to obtain $L^p$-based mapping properties for the perturbed operator $f(H)P_{ac}(H)$ from the simpler free operator $f((-\Delta)^m)$.  The boundedness of the wave operators on $L^p(\R^n)$ for any choice of $p\geq 2$ with the function $f(\cdot)=e^{-it(\cdot)}$ yield dispersive estimates of the form
	\begin{align}
	\|e^{-itH}P_{ac}(H)\|_{L^{p'}\to L^p}\les |t|^{-\frac{n}{2m}+\frac{n}{pm} }, \qquad \frac{1}{p}+\frac{1}{p'}=1.
	\end{align}
	To establish our results, we appeal to the intertwining identity and show that certain dispersive estimates fail for a specific choice of $f(\cdot)$ for small time when localized to large frequencies.
	
	We note that our results include the case of $m=1$, the usual second order Schr\"odinger operator in dimensions $n>3$.  The results of Theorem~\ref{thm:main} are, to the best of the authors' knowledge, new even in this case. In particular showing that the wave operators need not be bounded on $\frac{2n}{n-3}<p\leq \infty$ for insufficiently smooth potentials.  We also show that the  $L^p\to L^{p'}$ dispersive estimates 
	$$
	\bigg\| e^{it(-\Delta+V)}P_{ac}(-\Delta+V)\bigg\|_{p\to p'} \les |t|^{-\frac{n}{2}(\frac2p-1)}, \quad \text{when } n>3 \text{ and }  1\leq p<\frac{2n}{n+3}
	$$
	fail in this case, see Corollary~\ref{cor:m=1} below.

	Our work is inspired by recent work by the first and third authors on the $L^p$-continuity of the higher order wave operators in \cite{EGWaveOp} and the counterexample to dispersive estimates in the $m=1$ case of the second author and Visan in \cite{GV}.  The recent work on the $L^p$-boundedness of the wave operators for higher order Schr\"odinger operators was sparked by the work of Feng, Soffer, Wu and Yao on weighted $L^2$-based ``local dispersive estimates" for higher order Schr\"odinger operators considered in \cite{soffernew}, which extends the $m=1$ result of Jensen \cite{Jen}. In addition, the recent work on the $L^p(\R^3)$ boundedness of the wave operators for the fourth order ($m=2$) Schr\"odinger operators by the second and third authors \cite{GG4wave}, and the extensive works in the case of $m=1$ most notably that of Yajima, \cite{YajWkp1,YajWkp2,YajWkp3}.  The $L^p(\R)$ boundedness has recently been investigated by  Mizutani, Wan and Yao  in \cite{MWY}. 
	The $L^2$ existence and other properties of the higher order wave operators have been studied by many authors, for example by Agmon \cite{agmon}, Kuroda \cite{Kur1, Kur2}, H\"ormander \cite{Hor}, and Schechter, \cite{Sche,ScheArb}.  
	

	Similar to the usual second order Schr\"odinger operator, there is a Weyl criterion and $\sigma_{ac}(H)=\sigma_{ac}(H_0)=[0,\infty)$ for sufficiently decaying potentials.  In contrast, decay of the potential is not sufficient to ensure the lack of eigenvalues embedded in the continuous spectrum for the higher order operators, \cite{soffernew}.  Even perturbing with compactly supported, smooth potentials may induce embedded eigenvalues.  For the potentials we consider, we show that the eigenvalues cannot be too large, which allows the use of a limiting absorption principle for the perturbed operator on the interval $[C_V,\infty)$ for a sufficiently large $C_V>0$, see Lemma~\ref{lem:eval} below.
	
	To prove Theorem~\ref{thm:main} we show the failure of a dispersive estimate based on detailed analysis of oscillatory integrals involving the resolvent operators.  The splitting identity for $z\in\C\setminus[0,\infty)$, (c.f. \cite{soffernew}) allows us to study the resolvent $((-\Delta)^m-\lambda)^{-1}$ in terms of $R_0(z)=(-\Delta-z)^{-1}$, the usual ($2^{nd}$ order) Schr\"odinger resolvent.
	\be\label{eqn:Resol}
	\mR_0(z)(x,y):=((-\Delta)^m -z)^{-1}(x,y)=\frac{1}{ mz^{1-\frac1m} }
	\sum_{\ell=0}^{m-1} \omega_\ell R_0 ( \omega_\ell z^{\frac1m})(x,y)
	\ee
	here $\omega_\ell=\exp(i2\pi \ell/m)$ are the $m^{th}$ roots of unity.  
	It is convenient to use the change of variables $z= \lambda^{2m}$ with $\lambda$ restricted to the sector in the complex plane with $0<\arg(\lambda)<\pi/m$,
	\be\label{eqn:Resolv}
	\mR_0( \lambda^{2m})(x,y):=((-\Delta)^m -\lambda^{2m})^{-1}(x,y)=\frac{1}{ m\lambda^{2m-2}}
	\sum_{\ell=0}^{m-1} \omega_\ell R_0 ( \omega_\ell  \lambda^2)(x,y).
	\ee

	The kernal of higher order Schr\"odinger resolvents  $\mR_0(\lambda^{2m})$ grow like $\lambda^{\frac{n+1}{2}-2m}$ in the spectral variable when $n> 4m-1$, this growth necessitates a control over derivatives of the potential which was controlled in terms of $\mathcal F L^r$ norms in \cite{EGWaveOp}, similar to the conditions for the second order Schr\"odinger established by Yajima, \cite{YajWkp1}. When $m=1$ this growth may be exploited to cause a failure of $L^1\to L^\infty$ dispersive estimates in dimensions greater than three without sufficient smoothness of the potential; a counterexample was constructed by the second author and Visan, \cite{GV}.   We adapt and simplify this argument by considering $L^1\to L^\infty$ estimates of the operators of the form $H^{\frac{n(m-1)}{2m}}e^{itH}\psi(H/L^{2m})P_{ac}(H)$, with $\psi(s)$ is a cut-off to frequencies of size $s\approx 1$ and $L$ is a sufficiently large constant.  Treating this operator as an element of the functional calculus, the modified Stone's formula is
	$$
		H^{\frac{n(m-1)}{2m}}e^{itH}\psi(H/L^{2m})P_{ac}(H)f(x)=\frac{1}{2\pi i} \int_0^{\infty} \lambda^{\frac{n(m-1)}{2m}}e^{it\lambda }\psi(\lambda/L^{2m})\, dE_{ac}(\lambda)	f(x).
	$$
	Here the difference of the limiting resolvent operators, $\mR_V^\pm (\lambda)=\lim_{\epsilon\to 0^+} ((-\Delta)^m+V-\lambda \mp i \epsilon)^{-1}$,  provides the spectral measure $dE_{ac}(\lambda)=[\mR_V^+(\lambda)-\mR_V^-(\lambda)]\, d\lambda$.  These operators are well-defined between weighted spaces by Agmon's limiting absorption principle, and are well studied in \cite{EGWaveOp}.
	By appropriately relating the frequency and time, we show a dispersive estimate that the free operator (when $V=0$) satisfies cannot hold for the perturbed operator as $t\to 0$.  Using the intertwining identity, \eqref{eq:intertwining}, we show the wave operators are not bounded on $L^\infty$.  By appropriately rescaling both the powers of $H$ and the time decay, we extend the argument to $p<\infty$.
	
	The presence of an operator $H^{\frac{n(m-1)}{2m}}$ in the dispersive estimate is natural in the following sense: For $m>1$, the free fundamental solution $e^{itH_0}(x,y)$ has a central peak where $|x-y| \les |t|^{\frac{1}{2m}}$, then experiences a combination of polynomial decay and oscillation (similar to an Airy function) for larger distances $|x-y|$.  Differentiating $n(m-1)$ times yields an operator whose kernel is approximately the same size for all values of $|x-y|$, so the $L^1 \to L^\infty$ bound can be achieved at every length scale or when localized to any frequency band.

	The paper is organized as follows.  In Section~\ref{sec:free} we show that the free operator satisfies a family of dispersive estimates. In Section~\ref{sec:fail} we show that the perturbed operator cannot satisfy the same $L^1\to L^\infty$ dispersive estimate as the free operator for a specifically constructed potential.  As a consequence, the wave operator is not bounded on $L^\infty$.  Finally, in Section~\ref{sec:ext to p} we show how the argument may be adapted to construct a class of potentials for which the wave operators are unbounded on the larger range of $\frac{2n}{n-4m+1}<p\leq \infty$.

	\section{The free estimate}\label{sec:free}
	
	In this section we establish a family of dispersive estimates that the free operator $H_0=(-\Delta)^m$ satisfies.   In Section~\ref{sec:fail} we construct a compactly supported potential for which the perturbed operator cannot satisfy the analogous bound.
	
	\begin{prop}\label{prop:free}
		
		$$\big\| H_0^{\frac{n(m-1)}{2m}}e^{itH_0}\big\|_{1\to \infty} \les |t|^{-\frac{n}{2}},$$
and for all $\sigma \in \R$,
		$$\big\| H_0^{\frac{n(m-1)+i\sigma}{2m}}e^{itH_0}\big\|_{1\to \infty} \les (1+|\sigma|)^{\frac{n+2}{2}}|t|^{-\frac{n}{2}}.$$
		
	\end{prop}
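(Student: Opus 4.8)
The plan is to reduce the operator bound to a pointwise bound on its convolution kernel, rescale out the time dependence, and then run a frequency‑localized stationary‑phase analysis. The first estimate is the case $\sigma=0$ of the second and is the cleaner one, so I describe the second.

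\emph{Reduction.} The operator $H_0^{\frac{n(m-1)+i\sigma}{2m}}e^{itH_0}$ is convolution with the kernel $K_{t,\sigma}(x)=c_n\int_{\R^n}|\xi|^{n(m-1)+i\sigma}e^{it|\xi|^{2m}}e^{ix\cdot\xi}\,d\xi$ (a priori a tempered distribution), so its $L^1\to L^\infty$ norm equals the $L^\infty$ norm of $K_{t,\sigma}$. Substituting $\xi\mapsto|t|^{-\frac1{2m}}\xi$ gives $K_{t,\sigma}(x)=|t|^{-\frac n2}\,|t|^{-\frac{i\sigma}{2m}}\,G^{\pm}_\sigma(|t|^{-\frac1{2m}}x)$ with $\pm=\sgn(t)$, the exponent $-\frac n2$ appearing precisely because $n(m-1)+n=nm$, where $G^{\pm}_\sigma(y):=c_n\int|\xi|^{n(m-1)+i\sigma}e^{\pm i|\xi|^{2m}}e^{iy\cdot\xi}\,d\xi$. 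It therefore suffices to show $\sup_y|G^\pm_\sigma(y)|\les(1+|\sigma|)^{\frac{n+2}2}$, with the right side replaced by an absolute constant when $\sigma=0$.

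\emph{Dyadic splitting; the non‑resonant scales.} Fix a radial partition $1=\chi(\xi)+\sum_{j\ge1}\phi(2^{-j}\xi)$ with $\chi$ supported in $\{|\xi|\le2\}$ and $\phi$ supported in a fixed annulus $\{|\xi|\approx1\}$. The low‑frequency part of $G^\pm_\sigma$ is bounded by $\int_{|\xi|\le2}|\xi|^{n(m-1)}\,d\xi\les1$, uniformly in $y,\sigma$. For $j\ge1$, the substitution $\xi=2^j\zeta$ turns the $j$‑th piece into $c_n\,2^{jnm}\,2^{ij\sigma}\int a_\sigma(\zeta)\,e^{i2^{2mj}\psi_b(\zeta)}\,d\zeta$, with $a_\sigma(\zeta)=|\zeta|^{n(m-1)+i\sigma}\phi(\zeta)$, $\psi_b(\zeta)=\pm|\zeta|^{2m}+b\cdot\zeta$, $b=2^{-(2m-1)j}y$, and $2^{jnm}=(2^{2mj})^{n/2}$. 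Call a scale \emph{resonant} when $|b|$ lies in a fixed compact interval (equivalently $2^{(2m-1)j}\approx|y|$) — only $O(1)$ scales for each $y$ — and \emph{non‑resonant} otherwise. On the non‑resonant scales $|\nabla\psi_b|\gtrsim1$ on $\mathrm{supp}\,\phi$ while $|\partial^\beta\psi_b|\les1$ for $|\beta|\ge2$, and the $C^N$ norm of $|\zeta|^{i\sigma}$ on $\mathrm{supp}\,\phi$ is $\les(1+|\sigma|)^N$. Integrating by parts $N$ times with $L=(\nabla\psi_b\cdot\nabla)/(i|\nabla\psi_b|^2)$ bounds the $j$‑th oscillatory integral by $\les(1+|\sigma|)^N2^{-2mjN}$, hence the $j$‑th piece by $\les(1+|\sigma|)^N2^{jm(n-2N)}$. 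Choosing $N=\lfloor n/2\rfloor+1$ makes the sum over $j\ge1$ converge, and the whole non‑resonant contribution is $\les(1+|\sigma|)^{\lfloor n/2\rfloor+1}\le(1+|\sigma|)^{(n+2)/2}$; this is where the stated power of $(1+|\sigma|)$ comes from.

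\emph{The resonant scales.} On each of the $O(1)$ resonant scales put $\Lambda:=2^{2mj}\approx|y|^{2m/(2m-1)}$, so the phase is $\Lambda\psi_b$ with $|b|\approx1$. Then $\psi_b$ has a single critical point $\zeta_b$ (uniqueness since $r\mapsto r^{2m-1}$ is strictly increasing), with $|\zeta_b|\approx1$ and $\nabla^2\psi_b(\zeta_b)$ nondegenerate, all uniformly over $|b|\approx1$ because $\nabla^2|\zeta|^{2m}$ is nondegenerate away from the origin. If $|\sigma|\le\eps_0\Lambda$ for a small fixed $\eps_0$, I fold $|\zeta|^{i\sigma}=e^{i\sigma\log|\zeta|}$ into the phase: since $\frac{\sigma}{\Lambda}\log|\cdot|$ is $O(\eps_0)$ in $C^N(\mathrm{supp}\,\phi)$, the perturbed phase still has a nondegenerate critical point near $\zeta_b$, and the method of stationary phase applied with the now $\sigma$‑free amplitude $|\zeta|^{n(m-1)}\phi$ — split as usual into a piece supported near the critical point and a non‑stationary remainder — bounds the integral by $\les\Lambda^{-n/2}$ uniformly, so the $j$‑th piece is $\les\Lambda^{n/2}\Lambda^{-n/2}\les1$. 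If instead $|\sigma|>\eps_0\Lambda$, the trivial bound $\bigl|\int a_\sigma e^{i\Lambda\psi_b}\bigr|\le\int|\zeta|^{n(m-1)}\phi\les1$ already gives the $j$‑th piece $\les\Lambda^{n/2}\les|\sigma|^{n/2}\le(1+|\sigma|)^{(n+2)/2}$. Summing over the $O(1)$ resonant scales and combining with the previous paragraph proves $\sup_y|G^\pm_\sigma|\les(1+|\sigma|)^{(n+2)/2}$; when $\sigma=0$ every scale falls in the first branch and every estimate is $\les1$, which is the first assertion.

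\emph{Main obstacle.} The only delicate point is controlling the $\sigma$‑dependence on the resonant scales, where $|\zeta|^{i\sigma}$ oscillates at frequency $|\sigma|$ and a naive differentiation inside stationary phase would cost too many powers of $(1+|\sigma|)$. The resolution is the dichotomy above: fold $|\zeta|^{i\sigma}$ into the phase exactly when $|\sigma|$ is small relative to the large parameter $\Lambda=2^{2mj}$, and otherwise estimate trivially, using that then $\Lambda\les|\sigma|$ controls the amplifying factor $\Lambda^{n/2}$. The incidental fact that $\zeta_b$ may lie near $\partial(\mathrm{supp}\,\phi)$ is absorbed into the near/far splitting of the amplitude, or avoided entirely by using overlapping Littlewood--Paley cutoffs whose flat parts cover every frequency.
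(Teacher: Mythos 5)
Your proof is correct, but it takes a genuinely different route from the paper. You compute the kernel directly as an $n$-dimensional Fourier integral $\int |\xi|^{n(m-1)+i\sigma}e^{it|\xi|^{2m}}e^{ix\cdot\xi}\,d\xi$, scale out the time, and run a Littlewood--Paley decomposition with $n$-dimensional stationary phase on the $O(1)$ resonant dyadic scales and non-stationary phase elsewhere; the $\sigma$-dependence is handled by your dichotomy of folding $|\zeta|^{i\sigma}$ into the phase when $|\sigma|\le\eps_0 2^{2mj}$ and estimating trivially otherwise. The paper instead stays in the resolvent/spectral-measure framework: it uses the splitting identity \eqref{eqn:Resolv} to reduce $[\mR_0^+-\mR_0^-](\lambda^{2m})$ to the second-order resolvent difference, invokes the known asymptotics $[R_0^+-R_0^-](\lambda^2)(x,y)=\lambda^{n-2}(e^{i\lambda|x-y|}A_1-e^{-i\lambda|x-y|}A_2)$ with $A_j=\widetilde O(\la s\ra^{(1-n)/2})$, and after rescaling $\lambda|x-y|\mapsto\lambda$ is left with a one-dimensional oscillatory integral with phase $t\lambda^{2m}-\lambda$, treated by van der Corput near the critical point $\lambda_0=(2mt)^{-1/(2m-1)}$ and by integration by parts on the regions $\lambda\ll\lambda_0$ and $\lambda\gg\lambda_0$; there each $\lambda$-derivative of $\lambda^{i\sigma}$ costs a factor $\la\sigma\ra$, which is how the same exponent $\frac{n+2}{2}$ arises. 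Your version is more self-contained (no resolvent asymptotics or splitting identity needed) at the price of a multidimensional stationary-phase lemma with a moving critical point and the boundary-of-support caveat you note; the paper's version reduces everything to one dimension and meshes with the resolvent machinery used in the rest of the argument, and it carries the frequency cutoff $\chi(\lambda/L)$ uniformly in $L$, which is what Corollary~\ref{cor:free} actually needs downstream. Both are sound.
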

	\begin{proof}
		Using the splitting identity \eqref{eqn:Resolv}, along with the fact that $R_0^+(\omega_\ell \lambda^2)=R_0^-(\omega_\ell \lambda^2)$ for $\omega_\ell \notin \R$, that is when $\omega_\ell \neq 1$, we have
	\begin{align}
		[\mR_0^+-\mR_0^-](\lambda^{2m})(x,y)=\frac{1}{m\lambda^{2m-2}} [R_0^+-R_0^-](\lambda^2)(x,y).
	\end{align}
	Hence, one need only understand the usual second order free resolvent to understand the estimates in Propostion~\ref{prop:free}.  As usual, we estimate the evolution by using the functional calculus, in this case, we'll estimate (ignoring constants)
	\begin{align}\label{eqn:free disp}
		\sup_{x,y\in \R^n} \bigg| \int_0^\infty e^{it\lambda^{2m}} \lambda^{n(m-1)+1 + i\sigma} [R_0^+-R_0^-](\lambda^2)(x,y)\, d\lambda \bigg|.
	\end{align}
We utilize the ``symbol class" representation for the Schr\"odinger operators used by the second author and Visan in \cite{GV} to write
	$$
		[R_0^+-R_0^-](\lambda^2)(x,y)= \lambda^{n-2}\big(e^{i\lambda |x-y|} A_1(\lambda |x-y|) - e^{- i\lambda |x-y|} A_2(\lambda |x-y|)\big),
	$$
	where $A_1(s), A_2(s)=\widetilde O(\la s\ra^{\frac{1-n}{2}} )$.  Here $f(s)=\widetilde O(\la s\ra^{j} )$ means $|\partial_s^k f(s)|\les \la s\ra^{j-k}$ for each $k=0,1,2,\dots$

We  only consider the case $t>0$ and when there is a `$-$' sign on the phase; the other cases are similar or easier.
Therefore, we  only prove that   
	$$
		\sup_{x,y\in \R^n,L>0,t>0} t^{n/2}\bigg| \int_0^\infty e^{it\lambda^{2m}-i\lambda |x-y|} \lambda^{nm-1+i\sigma} A_2(\lambda |x-y|) \chi(\lambda/L)\, d\lambda \bigg| \les 1.
	$$
	Only the function $A_2(\lambda|x-y|)$ is present, so we omit the subscript in the rest of the calculation.  Changing the variable $\lambda |x-y| \mapsto \lambda$ and appropriately renaming the variables $t\mapsto t|x-y|^{2m}$, $L|x-y|\mapsto L$, it suffices to prove that
\begin{align}\label{eqn:free dis1}
		\sup_{t>0,L>0} t^{n/2}\bigg| \int_0^\infty e^{it\lambda^{2m}-i\lambda} \lambda^{nm-1+i\sigma} A(\lambda) \chi(\lambda/L)\, d\lambda \bigg| \les 1.
\end{align}
Note that the phase $\phi(\lambda)= t\lambda^{2m}-\lambda$ has a critical point at $\lambda_0=(2mt)^{-\frac1{2m-1}}$. Let $\psi_1$ be a smooth cutoff for the set $|\lambda|\approx \lambda_0$, $\psi_2$ for the set $|\lambda|\ll \lambda_0$ and $\psi_3$ for the set $|\lambda|\gg \lambda_0$ so that $\psi_1+\psi_2+\psi_3\equiv 1$.  
Let 
$$
I_j:=\int_0^\infty e^{it\lambda^{2m}-i\lambda} \lambda^{nm-1+i\sigma} A(\lambda)  \psi_j(\lambda)\chi(\lambda/L)\, d\lambda,\,\,\,\,j=1,2,3.
$$
Note that in the support of   $\psi_1(\lambda)$, we have 
$\phi^{\prime\prime}(\lambda)\approx t\lambda_0^{2m-2}$. Therefore by the corollary on page 334 of \cite{stein}, we have 
\begin{multline*}
|I_1|\les (t\lambda_0^{2m-2})^{-1/2}\int \big|\partial_\lambda\big(\lambda^{nm-1+i\sigma} A(\lambda)  \psi_1(\lambda)\chi(\lambda/L)\big)\big|d\lambda\\
\les t^{-1/2}\la \sigma\ra \lambda_0^{1-m+nm-1}\la \lambda_0\ra^{\frac{1-n}2}\les t^{-1/2}\la \sigma \ra \lambda_0^{(2m-1)(n-1)/2}\approx \la \sigma \ra t^{-n/2}.
\end{multline*}
Note that on the support of $\psi_2$, where $|\lambda|\ll \lambda_0$, we have  $|\phi^\prime|\gtrsim 1$. 
Therefore we may integrate by parts $N\leq nm-1$ times  to obtain the bound
$$
|I_2|\les \int_0^\infty \Big|\Big[\partial_\lambda\frac1{\phi^\prime(\lambda)}\Big]^{N} \big(\lambda^{nm-1+i\sigma} A(\lambda)  \psi_2(\lambda)\chi(\lambda/L)\big)\Big|d\lambda.
$$
Noting that the effect of a derivative on each term can be bounded by multiplication by $\la\sigma\ra \lambda^{-1}$, we have
$$
|I_2|\les \la \sigma\ra^N \int_0^{\lambda_0} \lambda^{nm-1-N} d\lambda \les \la \sigma\ra^N \lambda_0^{nm-N} \les \la \sigma\ra^N t^{-\frac{nm-N}{2m-1}},\,\,\,0\leq N\leq nm-1.
$$
Picking $N=0$ when $t>1$ and $\frac{n}{2} \leq N \leq nm-1$ for $0<t<1$ yields stronger bounds than $t^{-n/2}$.   

It remains to consider $I_3$. On the support of $\psi_3$, we have $|\phi^\prime|\gtrsim t\lambda^{2m-1}\gtrsim 1$. Therefore by integration by parts $N$ times we obtain
$$
|I_3|\les \la\sigma\ra^N \int_{\lambda_0}^\infty \lambda^{nm-1-N} \la \lambda\ra^{\frac{1-n}2} [t\lambda^{2m-1}]^{-N} d\lambda=
\la\sigma\ra^N t^{-N}\int_{\lambda_0}^\infty \lambda^{nm-1-2mN} \la \lambda\ra^{\frac{1-n}2} d\lambda. 
$$
When $t<1$, $\lambda_0\gtrsim 1$ and taking $N>\frac{nm+\frac12-\frac{n}2}{2m}$ implies the stronger bound $\la\sigma\ra^N t^{-\frac{n}2}t^{\frac{N-\frac12}{2m-1}}$.
For $t>1$, take $N=0$ on $[\lambda_0,t^{-\frac1{2m}}]$, and  take $N>n/2$ on $[t^{-\frac1{2m}},\infty)$  to obtain
$$
\les \int_0^{t^{-\frac1{2m}}} \lambda^{nm-1}d\lambda +\la\sigma\ra^N t^{-N}\int_{t^{-\frac1{2m}}}^\infty \lambda^{nm-1-2mN}  d\lambda\les t^{-n/2}+\la\sigma\ra^N t^{-N} t^{-\frac1{2m}(nm-2mN)}\les \la\sigma\ra^Nt^{-n/2}. 
$$
At no point was integration by parts needed more than $\frac{n}{2}+1$ times, so the sum $I_1 + I_2 + I_3$ is bounded
by $\la\sigma\ra^{\frac{n+2}{2}} t^{-n/2}$. 
\end{proof}

The above bound is all we need in Section~\ref{sec:fail} to establish the result of Theorem~\ref{thm:main} when $p=\infty$.  For the full range of $p$ considered in Section~\ref{sec:ext to p}, we need the following corollary.
\begin{corollary}\label{cor:free} Let $\psi$ be a smooth cutoff for the set $\{\lambda\in \R: \lambda\approx 1\}$. 
For all $p\in [1,2]$ and $L>0$ we have		
		$$\| H_0^{\frac{n(m-1)}{2m}(\frac2p-1)}e^{itH_0}\psi(H_0/L)\|_{p\to p^\prime} \les |t|^{-\frac{n}p+\frac{n}{2}},$$
		where the implict constant is independent of $p$ and $L$. 
 \end{corollary}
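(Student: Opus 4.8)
The plan is to obtain the estimate by analytic (Stein) interpolation between the endpoint $p=2$, where it is immediate, and the endpoint $p=1$, which is essentially Proposition~\ref{prop:free}. Since the power of $H_0$ appearing in the statement varies with $p$, one cannot interpolate a single fixed operator; instead I would interpolate the analytic family
\[
T_z:=H_0^{\frac{n(m-1)}{2m}(1-z)}\,e^{itH_0}\,\psi(H_0/L),\qquad 0\le \mathrm{Re}\, z\le 1,
\]
which acts on the Fourier side as multiplication by $|\xi|^{n(m-1)(1-z)}e^{it|\xi|^{2m}}\psi(|\xi|^{2m}/L)$. For each fixed $z$ in the strip this symbol is bounded and compactly supported in $\xi$ away from the origin (thanks to $\psi$), so $T_z$ is convolution with a Schwartz kernel, hence bounded on every $L^q$, and $z\mapsto \la T_zf,g\ra$ is analytic on the open strip and continuous up to its closure for $f,g$ Schwartz. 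Writing $\tfrac1p=1-\tfrac\theta2$, so that $p'$ is the conjugate exponent and $\theta=2-\tfrac2p\in[0,1]$, evaluation at $z=\theta$ gives exactly $T_\theta=H_0^{\frac{n(m-1)}{2m}(\frac2p-1)}e^{itH_0}\psi(H_0/L)$. It therefore suffices to bound $T_z$ on the two edges of the strip.

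On the edge $\mathrm{Re}\, z=1$ we have $T_{1+i\sigma}=H_0^{-\frac{n(m-1)}{2m}i\sigma}e^{itH_0}\psi(H_0/L)$: here $H_0^{i\tau}$ and $e^{itH_0}$ are unitary on $L^2$, and $\psi(H_0/L)$ is convolution with $L^{n/2m}K_1(L^{1/2m}\,\cdot\,)$, where $K_1=\mathcal F^{-1}[\psi(|\cdot|^{2m})]$ is Schwartz, so $\|\psi(H_0/L)\|_{q\to q}\le\|K_1\|_1$ for every $q$, uniformly in $L$. Hence $\|T_{1+i\sigma}\|_{2\to 2}\les 1$ uniformly in $\sigma,t,L$. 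On the edge $\mathrm{Re}\, z=0$ we have $T_{i\sigma}=\big(H_0^{\frac{n(m-1)+i\widetilde\sigma}{2m}}e^{itH_0}\big)\circ\psi(H_0/L)$ with $\widetilde\sigma=-n(m-1)\sigma$, so the second bound of Proposition~\ref{prop:free} together with the uniform $L^1\to L^1$ bound for $\psi(H_0/L)$ gives
\[
\|T_{i\sigma}\|_{1\to\infty}\les (1+|\widetilde\sigma|)^{\frac{n+2}{2}}|t|^{-\frac n2}\les (1+|\sigma|)^{\frac{n+2}{2}}|t|^{-\frac n2}.
\]
This is precisely where the imaginary-power version of Proposition~\ref{prop:free} is used: complex interpolation forces the shift $i\sigma$ into the exponent of $H_0$, and one needs the resulting growth in $\sigma$ to be of polynomial, hence subexponential, order for the hypotheses of Stein's theorem to be met.

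Feeding these two edge estimates into Stein interpolation yields $\|T_\theta\|_{p\to p'}\les |t|^{-(1-\theta)\frac n2}=|t|^{-(\frac2p-1)\frac n2}=|t|^{-\frac np+\frac n2}$, which is the claim. The main thing I would have to be careful about is uniformity of the constant in $p$ and $L$. Independence of $L$ is immediate, since both edge bounds are $L$-free. Independence of $p$ follows by inspecting the Poisson-kernel formula for the interpolation constant: the $|t|^{-n/2}$ factor produces the stated power $|t|^{-(1-\theta)n/2}$, the numerical constants contribute only a factor $\les 1$, and the remaining term $\frac{\sin\pi\theta}{2}\int_\R \frac{\log(1+|\sigma|)}{\cosh\pi\sigma-\cos\pi\theta}\,d\sigma$ stays bounded uniformly for $\theta\in[0,1]$ (it tends to $0$ as $\theta\to0^+$ and $\theta\to1^-$). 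I expect this bookkeeping, rather than any genuine analytic obstruction, to be the only delicate part; the substance of the corollary is contained in Proposition~\ref{prop:free} and the trivial $L^2$ bound.
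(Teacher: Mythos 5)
Your proposal is correct and follows essentially the same route as the paper: Stein interpolation between the trivial $L^2$ bound and the $L^1\to L^\infty$ bound of Proposition~\ref{prop:free}, with the imaginary-power estimate $(1+|\sigma|)^{\frac{n+2}{2}}|t|^{-n/2}$ supplying the admissible (subexponential) growth on the edge, and the cutoff handled via the uniform $L^1$ bound on the kernel of $\psi(H_0/L)$ (i.e.\ $\widehat\psi\in L^1$ after rescaling). The only cosmetic difference is that you carry $\psi(H_0/L)$ inside the analytic family throughout, while the paper interpolates first and composes with $\psi(H_0/L)$ afterwards.
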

\begin{proof} The claim without $\psi$ follows from complex interpolation of the bound in Proposition~\ref{prop:free} with the $L^2$ conservation law. The claim with $\psi$ follows from this and the fact that $\widehat\psi\in L^1$.
\end{proof}

\section{Failure of the dispersive estimate}\label{sec:fail}

In this section
we show that the perturbed evolution cannot satisfy the same dispersive bound as the free evolution.  For clarity, we concentrate in this section on the failure of the $L^1\to L^\infty$ dispersive estimate and consequently the unboundedness of the wave operator on $L^\infty$.  In Section~\ref{sec:ext to p}, we show how to adapt this argument to a larger range of $p$.

Let $\psi$ be a smooth cutoff for frequencies $\lambda\approx 1$. 
Note that the $L^\infty$ boundedness of wave operators together with asymptotic completeness, the intertwining identity \eqref{eq:intertwining} and the bound in Corollary~\ref{cor:free}  imply that  the bound 
\be\label{Hpsi}
\big\|H^{\frac{n(m-1)}{2m}}e^{itH}\psi(H/L^{2m})P_{ac}(H)\big\|_{L^1\to L^\infty}\les |t|^{-n/2}
\ee
holds for all $L>0$ uniformly in $L$.  

Let $C^\alpha(B(0,2))$ be the Banach space of real-valued $C^\alpha$ functions supported in $B(0,2)$. We  prove that the inequality  
\be\label{supHpsi}
\sup_{t>0, L>0} t^{n/2}\big\|H^{\frac{n(m-1)}{2m}}e^{itH}\psi(H/L^{2m})P_{ac}(H)\big\|_{L^1\to L^\infty}\leq C_V 
\ee
cannot hold for all $V\in C^\alpha(B(0,2))$ if $0\leq\alpha<\tfrac{n+1}2-2m$.  By showing that this dispersive bound fails for insufficiently smooth potentials $V$ for small times $t\to 0$, we show that the wave operators are not bounded on $L^\infty(\R^n)$.  

To do so we iterate the resolvent identity to expand the perturbed resolvent into a Born series
\begin{align}\label{eqn:born identity}
	\mR_V(z)=\sum_{j=0}^{2M-1}\big[ \mR_0(z)(-V\mR_0(z))^j \big]- (\mR_0(z)V)^M \mR_V(z) (V\mR_0(z))^M.
\end{align}
We note that the $j=0$ term corresponds to the free evolution.
In the following subsections we first show that the first term of the Born series, when $j=1$, for large frequencies and small times does not satisfy the dispersive bound.  In the following subsection, we show that the final term in the identity (the ``tail") obeys the dispersive bound.  Finally, we use these facts to show that the full evolution can't satisfy the dispersive bound.

\subsection{The first term of the Born series}\label{sec:first}
First we   consider the first term of the Born series  of the operator in \eqref{supHpsi}. Ignoring the constants,   
we write it's kernel, $K_{L,t}$, as a difference of kernels
\begin{align*}
K^{\pm}_{L,t}(x,y)= \int_0^\infty \int_{\R^n} e^{it\lambda^{2m}} \lambda^{2m-1} \lambda^{n(m-1)} \mR_0^\pm(\lambda^{2m})(x,z) V(z) 
\mR_0^\pm(\lambda^{2m})(z,y) \psi(\lambda^{2m}/L^{2m}) dz d\lambda 
\end{align*} 
Let $f$ be the $L^1$ normalized characteristic function of $B(0,1)$, and let $f_L(x)=(CL)^{n}f(x(CL))$. Below $C$ will be chosen to be large to guarantee that the relevant values of $|x|$ and $|y|$ are $\ll1$.  In what follows we prove that the dispersive estimate fails by letting $t=L^{-(2m-1)}$ and taking $L\to \infty$.  With $t^{\frac{n}2}=L^{\frac{n}2-nm}$, we define
$$a_{1,L}(V)= L^{\frac{n}2-nm} \int_{\R^n\times\R^n} K_{L,L^{-(2m-1)}}(x,y) f_L(x)f_L(y) dx dy.
$$
Note that  $a_{1,L}$ is a linear operator on $C^\alpha(B(0,2))$. By uniform boundedness principle, if the bound in \eqref{supHpsi} holds for the first term of the Born series and for all $V\in C^\alpha(B(0,2))$, then we have 
\be\label{UBBalpha}
\forall V\in C^\alpha(B(0,2)),\,\,\,\,
\sup_{L>0} |a_{1,L}(V)|\leq C_\alpha \|V\|_{C^\alpha(B(0,2))}.
\ee  
Therefore, it suffices  to find a sequence $\{V_L\}_{L>1} \subset C^\alpha(B(0,2))$  so that 
$$
\lim_{L\to\infty}\frac{|a_{1,L}(V_L)|}{\|V_L\|_{C^\alpha(B(0,2))}}=\infty.
$$
 
Note that for $\lambda |x-z| >1$, we have
\be\label{schR0}
R_0^\pm(\lambda^2)(x,z)=e^{\pm i\lambda |x-z|} \lambda^{n-2} \omega^\pm(\lambda |x-z|),
\ee
where $\omega^\pm(s)=c_\pm s^{\frac{1-n}{2}} +\widetilde{O}(s^{-\frac{1+n}2}),$  $s>1$.  In our argument below, we have $\lambda \approx L$ large and $|x-z|\approx 1$, which allows us to avoid the logarithmic behavior of even dimensional resolvents when $\lambda |x-z|\ll 1$. Using this in the splitting formula for $\mR_0^\pm(\lambda^{2m})$, \eqref{eqn:Resolv}, and noting the exponential decay of the remaining terms in the splitting identity allows them to be absorbed into $\omega^\pm(\lambda|x-z|)$, we have for $\lambda|x-z|>1$
$$
\mR_0^\pm(\lambda^{2m})(x,z)=e^{\pm i\lambda |x-z|} \lambda^{n-2m} \omega^\pm(\lambda |x-z|).
$$
Here, by a slight abuse of notation, $\omega^\pm$ satisfies the same bounds as $\omega^\pm$ in \eqref{schR0}.  One can also see Lemmas~3.2 and 6.2 in \cite{EGWaveOp} for more detailed representations of the resolvent.  Using this in the formula for $K^\pm_{L,t}(x,y)$, we have   
\begin{multline*}
K^\pm_{L}(x,y):=K^\pm_{L,L^{-(2m-1)}}\\ = \int_0^\infty \int_{\R^n} e^{i\frac{\lambda^{2m}}{L^{ 2m-1 }}\pm i\lambda R}\lambda^{mn+n-2m-1}\omega^\pm(\lambda r)\omega^\pm(\lambda s) V(z) \psi(\lambda^{2m}/L^{2m}) dz d\lambda
\end{multline*}
where $R:=r+s:=|x-z|+|z-y|$.  
Letting $\varphi(\lambda)= \lambda^{mn+n-2m-1} \psi(\lambda^{2m})$ be a modified cutoff to the interval $\lambda \approx 1$, we rewrite the kernel above as
$$
 L^{mn+n-2m-1} \int_0^\infty \int_{\R^n} e^{i\frac{\lambda^{2m}}{L^{ 2m-1 }}\pm i\lambda R}  \omega^\pm(\lambda r)\omega^\pm(\lambda s) V(z) \varphi(\lambda/L) dz d\lambda.
$$ 
In what follows we  have   $r,s=1+o(1)$ and $R=2+o(1)$, this is accomplished by taking $|x|,|y|$ small and $V(z)$ supported on a sufficiently small neighborhood of $|z|=1$. Changing the variable $\lambda\to L\lambda$ we have
$$
K^\pm_{L}(x,y) = 
 L^{mn+n-2m} \int_{\R^n} \int_0^\infty e^{iL(\lambda^{2m}\pm  \lambda R)}  \omega^\pm(\lambda Lr)\omega^\pm(\lambda L s) V(z) \varphi(\lambda) dz d\lambda,
$$
Note that the contribution of $+$ sign above is $O(L^{-N} \|V\|_{L^1})$ by nonstationary phase since $\varphi$ is smooth and supported on the set $\lambda\approx 1$ and $|\partial_\lambda^k \omega^{\pm}(\lambda Lr)|\leq C_k$ for all $k$. For the $-$ sign we have a critical point at $\lambda_0=(R/2m)^{\frac1{2m-1}}\approx 1$, and hence by stationary phase we have 
\begin{multline*}
	K^-_L(x,y)=C_m L^{mn+n-2m-\frac12} \int_{\R^n} \omega^-(\lambda_0 Lr)\omega^-(\lambda_0 Ls)\varphi(\lambda_0) e^{i c_m L  (\frac{R}{2})^{\frac{2m}{2m-1}}}V(z)dz\\
	+O(L^{mn-2m-\frac12}   \|V\|_{L^1}),
\end{multline*}
where  $c_m=(\tfrac1m)^{\frac{2m}{2m-1}}(1-2m)$.
For the error term, we used the bound $|\omega^\pm(\lambda)|\approx \la \lambda\ra^{\frac{1-n}2}.$ Using the asymptotic expansion above for $\omega^\pm$ and slightly modifying $\varphi$, we can rewrite this as
$$
K^-_L(x,y)= C_m L^{mn-2m+\frac12}  \int_{\R^n}  (rs)^{\frac{1-n}2} \varphi(\lambda_0) e^{i c_m L  (\frac{R}{2})^{\frac{2m}{2m-1}}}V(z)dz+O(L^{mn -2m-\frac12}   \|V\|_{L^1}),
$$

Let
 $$
 V_L(z) 
 = \cos\big(c_m L|z|^{\frac{2m}{2m-1}} \big) \rho_\delta(z),
 $$ where  $\delta\ll 1$, and $\rho_\delta$ is a smooth cutoff for the set $|z|\in (1-\delta,1+\delta)$. We  determine  $\delta$ later. However, $\delta\ll 1$ is fixed and we later let $L\to \infty$. Therefore, for sufficiently large $L$,
 $$\|V_L\|_{C^\alpha(B(0,2))}\approx \delta^{-\alpha}+ L^\alpha \approx L^{\alpha }
 $$
 For $x,y\in$supp$(f_L)$, i.e., $|x|,|y|\leq  \frac1{CL}$, we consider
\begin{multline*}
K_L(x,y)=K^+_L(x,y)-K^-_L(x,y)= O(L^{mn -2m-\frac12 }\delta)+\\
\frac{C_m}{2i} L^{mn-2m+\frac12 }  \int_{\R^n}  (rs)^{\frac{1-n}2} \varphi(\lambda_0) e^{i c_m L   [(\frac{R}{2})^{\frac{2m}{2m-1}}-|z|^{\frac{2m}{2m-1} } ]}\rho_\delta(z)dz\\
+
\frac{C_m}{2i} L^{mn-2m+\frac12 } \int_{\R^n} (rs)^{\frac{1-n}2} \varphi(\lambda_0) e^{i c_m L   [(\frac{R}{2})^{\frac{2m}{2m-1}}+|z|^{\frac{2m}{2m-1} } ]}\rho_\delta(z)dz.
\end{multline*}
Note that by nonstationary phase, for all $N\in\N$, the last integral is $O(L^{mn-2m+\frac12 -N} \delta^{1-N})$ since $|\nabla^k \rho_\delta|\les \delta^{-k}$. Using this with $N=1$, the error terms are combined to $O(L^{mn -2m-\frac12 })$. On the other hand, since in the support of $f_L$
$$\big|\frac{R}2-|z|\big|\leq\frac12 (|x|+|y|)\leq \frac1{CL},
$$
by choosing $C$ in the definition of $f_L$ sufficiently large depending only on $m$, we see that the phase in the first integral is $o(1)$.
Therefore,   we have the following lower bound for the first integral: 
$$L^{mn-2m+\frac12 }\delta.
$$
We can conclude that there is a constant $c\in \C$:
$$
\Re \big[c K_L^+(x,y)-c K_L^-(x,y)\big] \geq  L^{mn-2m+\frac12}\delta,
$$
for $\delta\ll 1$ fixed,  $L$ sufficiently large, and $|x|,|y|\leq \frac1{CL}$.  Therefore, we have 
$$\frac{|a_{1,L}(V_L)|}{\|V_L\|_{C^\alpha(B(0,2))}}\gtrsim L^{\frac{n+1}2 -2m-\alpha }\delta \to \infty
$$
 as $L\to\infty$ unless $\alpha\geq \frac{n+1}2 -2m$.

	\subsection{The Tail of the Born Series}\label{sec:Tail Bdd hi}
	
	Here we establish that, upon sufficient iteration of the Born series, that the tail of the Born series is bounded as $t\to 0$.
	Specifically, we show the following.

	\begin{prop}\label{prop:hi tail}
		
		Assume $V\in L^\infty(\R^n)$ is supported on $B(0,2)$, given any choice of $L>C_{V,n,m}$, where $0<C_{V,n,m}<\infty$ is a constant depending on the size of the potential, spatial dimension and order of the operator, there exists an $M$ so that the tail of the Born series satisfies the following bound
		\begin{align*}
			\sup_{x,y\in\R^n} \bigg|\int_0^\infty e^{it\lambda}\psi(\lambda/L^{2m}) \lambda^{\frac{n(m-1)}{2m}}(\mR_0^\pm(\lambda) V)^M \mR_V^\pm (\lambda) (V\mR_0^\pm(\lambda))^M(x,y) \, d\lambda \bigg| \les 1. 
		\end{align*}
		
	\end{prop}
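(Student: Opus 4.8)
Here is how I would approach the proof. The plan is to exploit that, at the energy scale $\lambda\approx L^{2m}$ selected by $\psi(\lambda/L^{2m})$, each factor $V\mR_0^\pm(\lambda)$ generated by iterating the resolvent identity \eqref{eqn:born identity} is simultaneously \emph{small} — of size $\les L^{-(2m-1)}$ in a suitable operator norm, by the high--energy limiting absorption principle — and \emph{smoothing}. After $2M$ iterations with $M=M(n,m)$ large, the smallness defeats both the algebraic factor $\lambda^{\frac{n(m-1)}{2m}}\big|_{\lambda\approx L^{2m}}=L^{n(m-1)}$ and the factor $\les L^{2m}$ from the length of the $\lambda$--support, while the smoothing is what lets the two \emph{outermost} free resolvents — those joining the unrestricted points $x,y$ to $\mathrm{supp}\,V$ — be absorbed into $L^1$ and $L^\infty$. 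It is enough to prove the bound for $0<t\les1$, the regime used in Section~\ref{sec:fail}; since $\psi$ is smooth with compact support, the case $t\gtrsim1$ follows by integrating by parts in $\lambda$ repeatedly, which only produces harmless factors $t^{-1}\le1$ together with extra negative powers of $L$, the necessary bounds on $\D_\lambda^k$ of the resolvents coming from the same estimates used below.

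Concretely, fix $K=K(n,m)\in\N$ with $K>\tfrac n{4m}$ and a large $M>K$, and split
\[
(\mR_0^\pm(\lambda)V)^M\,\mR_V^\pm(\lambda)\,(V\mR_0^\pm(\lambda))^M=A^\pm(\lambda)\,B^\pm(\lambda)\,C^\pm(\lambda),
\]
with $A^\pm=(\mR_0^\pm V)^K$, $C^\pm=(V\mR_0^\pm)^K$, $B^\pm=(\mR_0^\pm V)^{M-K}\mR_V^\pm(V\mR_0^\pm)^{M-K}$ (all evaluated at $\lambda$), so that inside each block consecutive resolvents are separated by a copy of $V$. To bound the inner block I would use the splitting identity \eqref{eqn:Resolv} together with the standard estimate $\|\la x\ra^{-s}R_0^\pm(\zeta)\la x\ra^{-s}\|_{2\to2}\les|\zeta|^{-1/2}$ ($s>\tfrac12$, $|\zeta|\gtrsim1$), which applied to each $m$-th root of unity in \eqref{eqn:Resolv} gives $\|\la x\ra^{-s}\mR_0^\pm(\lambda)\la x\ra^{-s}\|_{2\to2}\les\lambda^{-\frac{2m-1}{2m}}$ for $\lambda\gtrsim1$; for the single perturbed resolvent, Lemma~\ref{lem:eval} provides a limiting absorption principle on $[C_V,\infty)$, and the Neumann series $\la x\ra^{-s}\mR_V^\pm\la x\ra^{-s}=(I+\la x\ra^{-s}\mR_0^\pm\la x\ra^{-s}\,\la x\ra^{s}V\la x\ra^{s})^{-1}\la x\ra^{-s}\mR_0^\pm\la x\ra^{-s}$, convergent for $\lambda\ge C_V$ since $V$ is bounded with compact support, gives the same bound. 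Since $\la x\ra^{s}V\la x\ra^{s}$ is bounded, inserting $I=\la x\ra^{s}\la x\ra^{-s}$ between each resolvent and its neighbouring $V$ yields $\|\la x\ra^{-s}B^\pm(\lambda)\la x\ra^{-s}\|_{2\to2}\les\lambda^{-\frac{2m-1}{2m}(2(M-K)+1)}\les L^{-(2m-1)(2(M-K)+1)}$ on $\mathrm{supp}\,\psi(\cdot/L^{2m})$.

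For the outer blocks I would use the explicit resolvent kernel (from \eqref{schR0} and \eqref{eqn:Resolv}; cf.\ Lemmas~3.2 and~6.2 of~\cite{EGWaveOp}), which near the diagonal behaves like the $(-\Delta)^m$ fundamental solution $|u-v|^{2m-n}$ and away from it oscillates with amplitude $\les\lambda^{\frac1{2m}(\frac{n+1}2-2m)}$: composing $K$ of these against the compactly supported bounded $V$ produces a kernel that is square--integrable in its second variable over $B(0,2)$ uniformly in its first variable, precisely because $K>\tfrac n{4m}$ makes $2(2mK-n)>-n$; hence $\|A^\pm(\lambda)\|_{L^2(B(0,2))\to L^\infty(\R^n)}\les\lambda^{\beta}$ for some $\beta=\beta(n,m)$ independent of $M$, and dually $\|C^\pm(\lambda)\|_{L^1(\R^n)\to L^2(B(0,2))}\les\lambda^{\beta}$. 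Multiplying the three bounds and integrating in $\lambda\approx L^{2m}$ over an interval of length $\les L^{2m}$ then gives, uniformly in $x,y$, a bound $\les L^{2m+n(m-1)+2\beta-(2m-1)(2(M-K)+1)}$, which is $\les1$ once $M=M(n,m)$ is taken so large that $(2m-1)(2(M-K)+1)\ge2m+n(m-1)+2\beta$; choosing $L>C_{V,n,m}$ keeps all constants under control (in particular the threshold $C_V$ of Lemma~\ref{lem:eval} and the geometric factors $\|\la x\ra^{s}V\la x\ra^{s}\|^{2(M-K)}$).

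The main obstacle is the outer--block step. When $n\ge4m$ the fundamental solution $|x|^{2m-n}$ of $(-\Delta)^m$ fails to be locally square--integrable, and $\mR_0^\pm(\lambda)$ satisfies no global $L^p\to L^q$ estimate in this range, so a single resolvent cannot be interposed between the unrestricted point $x$ and $\mathrm{supp}\,V$. This is exactly what forces the extra $K\gtrsim n/4m$ Born iterations on each side — needed both to regain enough smoothing to land in $L^\infty$ and to retain only a fixed power of $L$ in the outer factors — and the remaining work is the exponent bookkeeping verifying that the smallness extracted from the inner block still dominates, which is why $M$ must be large.
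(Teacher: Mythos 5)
Your proposal is correct and follows essentially the same strategy as the paper: the outer blocks of $K>\tfrac{n}{4m}$ iterated free resolvents (the paper's $\ell_1=\lfloor\tfrac{n}{4m}\rfloor+1$) supply local $L^2$ regularity connecting the unrestricted points to $\operatorname{supp}V$, while the inner block is controlled by the weighted-$L^2$ limiting absorption principle, with $M-K$ (the paper's $\ell_2$) taken large enough that the gain of $\lambda^{-\frac{2m-1}{2m}}$ per resolvent beats all the positive powers of $\lambda\approx L^{2m}$. The only real difference is that you obtain the high-energy bound for $\mR_V^\pm$ by a Neumann series rather than quoting Theorem~3.9 of \cite{soffernew}, which is a legitimate, slightly more self-contained route.
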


	We utilize the limiting absorption principle established in \cite{soffernew}.  For fixed $m,n$, there are no positive eigenvalues of $H$ on the support of the cut-off provided we take $L$ sufficiently large, depending only on the size of $V$.   

\begin{lemma}\label{lem:eval}
	
	Fix $\gamma>\frac12$. Assume that $|V(x)|\leq M\la x\ra^{-2\gamma}$, $x\in\R^n$. Then, there exists a constant $C=C(n,m,\gamma, M)>0$ such that any $\lambda >C $ cannot be an eigenvalue of $H$.
	
\end{lemma}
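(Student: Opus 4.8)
The plan is a standard high-energy bootstrap. Since $|V|\le M$, the operator $H=(-\Delta)^m+V$ is self-adjoint on $H^{2m}(\R^n)$ by Kato--Rellich, and I would argue by contradiction: suppose $H\psi=\lambda\psi$ for some $\lambda>0$ and $\psi\in L^2(\R^n)\setminus\{0\}$, i.e.\ $((-\Delta)^m-\lambda)\psi=-V\psi$. Applying $\mR_0(\lambda+i\epsilon)=((-\Delta)^m-\lambda-i\epsilon)^{-1}$ for $\epsilon>0$ yields the identity
\[
\psi=-\mR_0(\lambda+i\epsilon)(V\psi)-i\epsilon\,\mR_0(\lambda+i\epsilon)\psi .
\]
Because $H_0=(-\Delta)^m$ has purely absolutely continuous spectrum, the spectral theorem and dominated convergence give $\epsilon\,\mR_0(\lambda+i\epsilon)\psi\to0$ in $L^2$ as $\epsilon\to0^+$. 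Since $\la x\ra^{\gamma}V\psi=(\la x\ra^{2\gamma}V)(\la x\ra^{-\gamma}\psi)\in L^2$ by the hypothesis $|V|\le M\la x\ra^{-2\gamma}$, the limiting absorption principle for $R_0$ --- and hence, through the splitting identity \eqref{eqn:Resolv}, for $\mR_0$ --- lets me pass to the limit $\epsilon\to0^+$ in weighted $L^2$ to obtain
\[
\la x\ra^{-\gamma}\psi=-\la x\ra^{-\gamma}\mR_0^+(\lambda)(V\psi).
\]

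Next I would quantify the decay of the weighted free resolvent at high energy. Writing $\mu=\lambda^{1/(2m)}$, in \eqref{eqn:Resolv} the $\ell=0$ term involves $R_0^+(\mu^2)$, for which the classical weighted limiting absorption estimate (using $\gamma>\tfrac12$) gives $\|\la x\ra^{-\gamma}R_0^+(\mu^2)\la x\ra^{-\gamma}\|_{2\to2}\les_{n,\gamma}\mu^{-1}$ for $\mu\ge1$; for $\ell=1,\dots,m-1$ the spectral parameter $\omega_\ell\mu^2$ lies at distance $\gtrsim_m\mu^2$ from $[0,\infty)$, so the trivial bound $\|\la x\ra^{-\gamma}R_0(\omega_\ell\mu^2)\la x\ra^{-\gamma}\|_{2\to2}\le\|R_0(\omega_\ell\mu^2)\|_{2\to2}\les_m\mu^{-2}$ suffices. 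Collecting these with the prefactor $\mu^{2-2m}$ from \eqref{eqn:Resolv} yields
\[
\big\|\la x\ra^{-\gamma}\mR_0^+(\lambda)\la x\ra^{-\gamma}\big\|_{2\to2}\les_{n,m,\gamma}\mu^{2-2m}\big(\mu^{-1}+\mu^{-2}\big)\les_{n,m,\gamma}\lambda^{\frac{1-2m}{2m}},\qquad \lambda\ge1 .
\]

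To close the bootstrap, set $\phi=\la x\ra^{-\gamma}\psi\ne0$ and let $W$ be multiplication by $\la x\ra^{2\gamma}V$, so $\|W\|_{2\to2}=\|\la\cdot\ra^{2\gamma}V\|_\infty\le M$ and $V\psi=\la x\ra^{-\gamma}W\phi$. The identity from the first step becomes $\phi=-\big[\la x\ra^{-\gamma}\mR_0^+(\lambda)\la x\ra^{-\gamma}\big]W\phi$; taking $L^2$ norms and dividing by $\|\phi\|_2>0$ gives $1\le M\,\|\la x\ra^{-\gamma}\mR_0^+(\lambda)\la x\ra^{-\gamma}\|_{2\to2}\les_{n,m,\gamma}M\,\lambda^{\frac{1-2m}{2m}}$, hence $\lambda^{\frac{2m-1}{2m}}\les_{n,m,\gamma}M$. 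This forces $\lambda\le C$ for a constant $C=C(n,m,\gamma,M)$ (which we take $\ge1$ so the restriction $\lambda\ge1$ above is harmless), and so no $\lambda>C$ is an eigenvalue.

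I expect the only delicate point to be the $\epsilon\to0^+$ passage in the first paragraph: one must verify that $\la x\ra^{-\gamma}\mR_0(\lambda+i\epsilon)\la x\ra^{-\gamma}$ converges (this is precisely the limiting absorption principle, which via \eqref{eqn:Resolv} reduces to the second-order case) and that the $i\epsilon\,\mR_0(\lambda+i\epsilon)\psi$ term drops out (using the absence of point spectrum of $(-\Delta)^m$); the rest is bookkeeping in weighted spaces and the scaling of the classical resolvent bound. The sign of the regularization selects $\mR_0^+$ over $\mR_0^-$, but this is immaterial since both boundary values obey the same weighted estimate. The classical inputs --- Kato--Rellich self-adjointness, pure absolute continuity of $(-\Delta)^m$, and the high-energy weighted resolvent bound for $-\Delta$ --- may be quoted from \cite{agmon}, and the requisite limiting absorption principle is also recorded in \cite{soffernew}.
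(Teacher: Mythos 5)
Your proposal is correct and follows essentially the same route as the paper: contradiction via the resolvent identity $\psi=\mR_0(\lambda+i\epsilon)[-i\epsilon\psi-V\psi]$, killing the $\epsilon$-term by dominated convergence, and closing a bootstrap in weighted $L^2$ using the high-energy bound $\|\la x\ra^{-\gamma}\mR_0^{+}(\lambda)\la x\ra^{-\gamma}\|_{2\to2}\les\lambda^{\frac{1-2m}{2m}}$ together with $\|V\psi\|_{L^{2,\gamma}}\le M\|\psi\|_{L^{2,-\gamma}}$. The only cosmetic difference is that you derive that resolvent bound from the splitting identity \eqref{eqn:Resolv}, whereas the paper simply quotes the limiting absorption principle of \cite{soffernew}.
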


\begin{proof}
	
	Assume that $\lambda>0$ is an eigenvalue of $H$, then there exists $0\neq\Psi\in L^2$ such that
	$$
		((-\Delta)^m+V)\Psi=\lambda \Psi, \quad \Rightarrow \quad ((-\Delta)^m-\lambda -i\epsilon)\Psi=-i\epsilon\Psi-V\Psi.
	$$
	Here $\epsilon>0$, so that $\lambda+i\epsilon$ is in the resolvent set of $H_0$.  Acting the resolvent operator $\mR_0(\lambda+i\epsilon)$ on both sides of the above expression yields
	$$
		\Psi=\mR_0(\lambda+i\epsilon)[-i\epsilon\Psi-V\Psi].
	$$
	Hence,
$$
		\|\Psi\|_{L^{2,-\gamma}} \leq \|\mR_0(\lambda+i\epsilon) \epsilon \Psi\|_{L^2}+ \|\mR_0(\lambda+i\epsilon) V \Psi\|_{L^{2,-\gamma}} 
		\les \bigg\| \frac{\epsilon}{|\xi|^{2m}-\lambda-i\epsilon} \hat{\Psi}\bigg\|_{L^{2}_{\xi}}+ \lambda^{\frac{1-2m}{2m}} \|V\Psi\|_{L^{2, \gamma}},
$$
	where we invoked the limiting absorption principle for the free operator on the second summand.  Taking $\epsilon \to 0^+$, the dominated convergence theorem suffices to conclude the first summand vanishes.  Using the bound on $V$, we have $\|V\Psi\|_{L^{2, \gamma}}\leq M\|\Psi\|_{L^{2, -\gamma}}$, hence
	\begin{align*}
		\|\Psi\|_{L^{2,-\gamma}} \les\lambda^{\frac{1-2m}{2m}}M\|\Psi\|_{L^{2, -\gamma}} \les \lambda^{\frac{1-2m}{2m}}  M \|\Psi\|_{L^{2, -\gamma}}.
	\end{align*}
	From here, we conclude  for large enough $\lambda$ that $\|\Psi\|_{L^{2,-\gamma}}=0$, and have that $\Psi=0$ a.e.  Thus $\lambda$ cannot be an eigenvalue of $H$.

\end{proof}

The above suffices to allow us to use the limiting absorption principle for the class of potentials we consider.
In the statement below $B(s,-s')$ is the space of bounded linear operators mapping $L^{2,s}\to L^{2,-s'}$, and assumes the lack of embedded eigenvalues in the continuous spectrum of $H$.  

\begin{theorem}[Theorem 3.9 in \cite{soffernew}]\label{thm:lap}
	
	For $k=0,1,2,3\dots,$ let $|V(x)|\les \la x \ra^{-\beta}$ for some $\beta>2+2k$, then for $s,s'>k+\frac12$, $\mR_V^{(k)}(z)\in B(s,-s')$ is continuous for $z>0$.  Furthermore,
	we have 
	$$
	\big\| \mR_V^{(k)}(z)
	\big\|_{L^{2,s}\to L^{2,-s'}}\les |z|^{\frac{1-2m}{2m}(1+k)}.
	$$
	
\end{theorem}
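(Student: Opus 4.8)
The plan is to throw away the oscillatory factor $e^{it\lambda}$ entirely and reduce the proposition to a $t$-independent pointwise bound on the kernel of the iterated operator, which I would extract from a layered factorization alternating between weighted $L^2$ estimates for the bulk of the resolvents and $L^1$--$L^\infty$ estimates for the two endpoint resolvents. Since $\psi(\lambda/L^{2m})$ localizes to an interval $\lambda\approx L^{2m}$ of length $\approx L^{2m}$ on which $\lambda^{\frac{n(m-1)}{2m}}\approx L^{n(m-1)}$, writing
\[
\mathcal T^\pm(\lambda):=(\mR_0^\pm(\lambda)V)^M\,\mR_V^\pm(\lambda)\,(V\mR_0^\pm(\lambda))^M ,
\]
the triangle inequality reduces the claim to
\[
\sup_{x,y\in\R^n}\big|\mathcal T^\pm(\lambda)(x,y)\big|\les L^{-n(m-1)-2m},\qquad \lambda\approx L^{2m},\quad L>C_{V,n,m}.
\]
In particular no stationary phase or integration by parts in $\lambda$ is needed, which is what makes the tail softer than the free estimate of Proposition~\ref{prop:free}. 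Throughout I write $\mu:=\lambda^{1/(2m)}\approx L$.

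To prove this kernel bound I would peel the outermost free resolvent off each side, $\mathcal T^\pm(\lambda)=\mR_0^\pm(\lambda)\,\mathcal M^\pm(\lambda)\,\mR_0^\pm(\lambda)$ with $\mathcal M^\pm(\lambda)=V(\mR_0^\pm(\lambda)V)^{M-1}\mR_V^\pm(\lambda)(V\mR_0^\pm(\lambda))^{M-1}V$ beginning and ending with the bounded, compactly supported factor $V$, and then estimate through
\[
\big|\mathcal T^\pm(\lambda)(x,y)\big|\le\big\|\mR_0^\pm(\lambda)(x,\cdot)\big\|_{L^1(B(0,2))}\big\|\mathcal M^\pm(\lambda)\big\|_{L^1(B(0,2))\to L^\infty(B(0,2))}\big\|\mR_0^\pm(\lambda)(\cdot,y)\big\|_{L^1(B(0,2))}.
\]
The two outer $L^1$ factors are each $\les\mu^{\frac{n+1}{2}-2m}$ uniformly in $x$ (resp.\ $y$), by the pointwise resolvent bounds coming from the splitting identity \eqref{eqn:Resolv} (cf.\ Lemmas~3.2 and 6.2 of \cite{EGWaveOp}); the key point is that the $L^1(B(0,2))$ norm stays finite even when $n\ge4m$, precisely the regime in which the diagonal singularity $|x-z|^{2m-n}$ is not locally $L^2$, so that pairing the two outer resolvents against an $L^2\to L^2$ bound for $\mathcal M^\pm$ is not directly possible.

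For $\big\|\mathcal M^\pm\big\|_{L^1\to L^\infty}$ I would fix $N_1:=\lceil n/(4m)\rceil$ and, for $M$ sufficiently large, split $\mathcal M^\pm=\mathcal A_L^\pm\,\mathcal B^\pm\,\mathcal A_R^\pm$ with $\mathcal A_R^\pm=(V\mR_0^\pm)^{N_1}V$, $\mathcal A_L^\pm=V(\mR_0^\pm V)^{N_1}$, and $\mathcal B^\pm=(\mR_0^\pm V)^{M-1-N_1}\mR_V^\pm(V\mR_0^\pm)^{M-1-N_1}$, all of whose intermediate points are forced into $B(0,2)$ by a factor of $V$. I would then show: (i) $\mathcal A_R^\pm(\lambda)\colon L^1(B(0,2))\to L^2(B(0,2))$ and $\mathcal A_L^\pm(\lambda)\colon L^2(B(0,2))\to L^\infty(B(0,2))$ are bounded with norm $\les C_{V,n,m}\,\mu^{N_1(\frac{n+1}{2}-2m)}$, a fixed power of $\mu$, because each $\mR_0^\pm(\lambda)$ maps $L^p(B(0,2))\to L^q(B(0,2))$ whenever $\tfrac{1}{q}\ge\tfrac{1}{p}-\tfrac{2m}{n}$ with operator norm $\les\mu^{\frac{n+1}{2}-2m}$ (Young/Hardy--Littlewood--Sobolev on a bounded set applied to the kernel bounds), and $N_1$ such steps, interspersed with the bounded multiplications by $V$, carry $L^1$ up to $L^2$ and $L^2$ up to $L^\infty$; (ii) $\mathcal B^\pm(\lambda)\colon L^2(B(0,2))\to L^2(B(0,2))$ is bounded with norm $\les C_V^{\,2(M-1-N_1)}\mu^{(1-2m)(2(M-1-N_1)+1)}$, because each of its free resolvents contributes the genuine decay factor $\mu^{1-2m}$ via the classical Agmon limiting absorption principle together with \eqref{eqn:Resolv} (the weights $\la x\ra^{\pm s}$ being $\approx1$ on $B(0,2)$, so the weighted bound localizes to $L^2(B(0,2))\to L^2(B(0,2))$ once an adjacent $V$ is present), and its single $\mR_V^\pm(\lambda)$ contributes the factor $\mu^{1-2m}$ by Theorem~\ref{thm:lap} with $k=0$, which applies because Lemma~\ref{lem:eval} ensures $H$ has no eigenvalue on the support of $\psi(\cdot/L^{2m})$ once $L>C_{V,n,m}$.

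Composing the three blocks and inserting them into the displays above gives
\[
\sup_{x,y}\big|\mathcal T^\pm(\lambda)(x,y)\big|\les C_{V,n,m}^{2M}\,\mu^{\,(n+1-4m)+2N_1(\frac{n+1}{2}-2m)+(1-2m)(2M-2N_1-1)}.
\]
Since $1-2m<0$ and $N_1$ is fixed, the exponent of $\mu$ tends to $-\infty$ as $M\to\infty$; I would fix $M=M(n,m)$ so that it is $\le -n(m-1)-2m-1$, then enlarge $C_{V,n,m}$ (now also depending on this fixed $M$) so that $C_{V,n,m}^{2M}\mu^{-1}\le1$ for $\mu\ge C_{V,n,m}$, which yields the kernel bound and hence the proposition after reinserting it into the first display. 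The main obstacle --- and the only step beyond bookkeeping --- is the endpoint resolvents: their diagonal singularity $|x-z|^{2m-n}$ fails to be locally $L^2$ for $n\ge4m$, so one is forced into the $L^1(B(0,2))\to L^2(B(0,2))\to L^2(B(0,2))\to L^\infty(B(0,2))$ scheme, inserting a fixed buffer of $N_1\approx n/(4m)$ smoothing resolvents before the Agmon decay can be harvested from the remaining $\approx 2M$ of them. The remaining verifications --- the explicit $\mu$-power arithmetic and the finiteness of the various weighted and mixed-Lebesgue norms on $B(0,2)$ --- are routine.
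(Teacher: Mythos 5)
Your proposal does not prove the statement at hand. The statement is Theorem~\ref{thm:lap}, the weighted-$L^2$ limiting absorption principle for the perturbed resolvent: continuity of $\mR_V^{(k)}(z)\colon L^{2,s}\to L^{2,-s'}$ for $z>0$ together with the quantitative decay $\|\mR_V^{(k)}(z)\|_{L^{2,s}\to L^{2,-s'}}\les |z|^{\frac{1-2m}{2m}(1+k)}$. In the paper this result is not proved at all; it is quoted verbatim from Theorem~3.9 of \cite{soffernew} and used as an external input. What you have written is instead an argument for Proposition~\ref{prop:hi tail} (the uniform bound on the tail of the Born series): you estimate the kernel of $(\mR_0^\pm V)^M\mR_V^\pm(V\mR_0^\pm)^M$ and never address the operator $\mR_V^{(k)}(z)$ between weighted spaces, the $z$-derivatives for general $k$, the continuity assertion, or the stated decay exponent $\frac{1-2m}{2m}(1+k)$.

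Moreover, read as a proof of Theorem~\ref{thm:lap} your argument is circular: in the middle block $\mathcal B^\pm$ you bound the single factor $\mR_V^\pm(\lambda)$ ``by Theorem~\ref{thm:lap} with $k=0$,'' i.e.\ you invoke the very estimate to be proved. A genuine proof must be assembled from the other end: first the free bound, where the splitting identity \eqref{eqn:Resolv} transfers Agmon's classical limiting absorption principle for $R_0$ to $\mR_0(z)$ with the decay $|z|^{\frac{1-2m}{2m}}$; then an Agmon--Kuroda type inversion of $I+\mR_0(z)V$ (or of the symmetrized $I+v_1\mR_0(z)v_2$) on weighted $L^2$, using compactness/Fredholm theory, smallness of the free resolvent at high energy, and absence of eigenvalues at the energies in question (in this paper guaranteed on the relevant range only through Lemma~\ref{lem:eval}); and finally an induction on $k$ obtained by differentiating the resolvent identity $\mR_V=(I+\mR_0V)^{-1}\mR_0$ and using the free derivative bounds $\|\mR_0^{(j)}(z)\|_{L^{2,s}\to L^{2,-s'}}\les |z|^{\frac{1-2m}{2m}(1+j)}$ for $s,s'>j+\frac12$, which is precisely where the hypotheses $\beta>2+2k$ and $s,s'>k+\frac12$ enter. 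None of these steps appears in your proposal. (As a side remark, your layered $L^1\to L^2\to L^2\to L^\infty$ scheme is a plausible variant of the paper's proof of Proposition~\ref{prop:hi tail}, which instead uses the pointwise weighted bounds \eqref{eqn:iter res1}--\eqref{eqn:iter res2}; but that is a different statement from the one you were asked to prove.)
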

Note that, in particular, these bounds hold for the free resolvent, for which there are no embedded eigenvalues, as well as for the perturbed operator when $|z|$ is sufficiently large by Lemma~\ref{lem:eval}.
	
	\begin{proof}[Proof of Proposition~\ref{prop:hi tail}]
		
		The proof here mirrors closely the high energy argument in \cite{EGWaveOp}. Since we need only show boundedness, the argument is straight-forward.  We write $M=\ell_1+\ell_2$ where $\ell_1=\lfloor \frac{n}{4m}\rfloor +1$ is the number of iterations of the resolvent required to ensure $(\mR_0^\pm V)^{\ell_1}$ is locally $L^2$, and $\ell_2$ is selected large enough to ensure that there is sufficient decay in $\lambda$ using the limiting absorption principle in Theorem~\ref{thm:lap}.  This suffices to ensure the desired integral is bounded.

		Following the argument in Propositions~5.3 and 6.5 \cite{EGWaveOp}, we have the bound $\sigma>\f12$ and $\ell_1 =\lfloor \frac{n}{4m} \rfloor+1$ we have
		\begin{align}\label{eqn:iter res1}
			\|(V\mathcal R_0^+)^{\ell_1-1}V \mathcal R_0^\pm (\lambda^{2m})(\cdot, y)\|_{L^{2,\sigma}}  \les \frac{\lambda^{\ell_1(\frac{n+1}{2}-2m)}}{\la y\ra^{\frac{n-1}{2}}},
		\end{align}
	and similarly,
	\begin{align}\label{eqn:iter res2}
		\|\mathcal  (\mathcal R_0^+V)^{\ell_1 }(\lambda^{2m})(x, \cdot)\|_{L^{2,\sigma}}  \les \frac{\lambda^{\ell_1(\frac{n+1}{2}-2m)}}{\la x\ra^{\frac{n-1}{2}}}.
	\end{align}
	Selecting $L$ large enough so there are no eigenvalues on the support of the cut-off allow for iterated use of Theorem~\ref{thm:lap} which yields
	\begin{align}\label{eqn:lap rep}
		\|  (\mathcal R_0^+(\lambda^{2m}) V)^{\ell_2}\mathcal R_V^+(\lambda^{2m})(V \mathcal R_0^+(\lambda^{2m}))^{\ell_2}\|_{L^{2,\sigma}\to L^{2,-\sigma}} \les \lambda^{(2\ell_2+1)(1-2m)}.
	\end{align}
	Combing these, we arrive at the bound
	\begin{multline}\label{eqn:hi tail int}
		\bigg|\int_0^\infty e^{it\lambda^{2m}}\psi(\lambda^{2m}/L^{2m}) \lambda^{(n+2)m-n-1}(\mR_0^\pm(\lambda^{2m}) V)^M \mR_V^\pm (\lambda^{2m}) (V\mR_0^\pm(\lambda^{2m}))^M \, d\lambda \bigg| \\ 
		\les  \frac{1}{ (\la x\ra \la y \ra)^{\frac{n-1}{2}} } \int_0^\infty \psi(\lambda^{2m}/L^{2m}) \lambda^{((n+2)m-n-1+2\ell_1 (n+1-4m)+(2\ell_2+1)(1-2m) )}\, d\lambda.
	\end{multline}
	Having selected $\ell_1=\lfloor \frac{n}{4m}\rfloor +1$ we selected $\ell_2\in \mathbb N$ so that
	$$
		\ell_2>\frac{1+n(m-1)+2\ell_1 (n+1-4m) }{4m-2}.
	$$	
	Thus, we may bound
	\begin{align*}
		|\eqref{eqn:hi tail int}| \les \frac{1}{ (\la x\ra \la y \ra)^{\frac{n-1}{2}} } \int_0^\infty \psi(\lambda^{2m}/L^{2m}) \lambda^{-2}\, d\lambda \les \int_{\lambda \approx L} \lambda^{-2}\, d\lambda \les L^{-1}.
	\end{align*}
	This is uniformly bounded in $x,y\in\R^n$ and $L>1$.
		
	\end{proof} 

	We note that these bounds can be shown to hold for a much larger class of potentials $V$, and one can show that the tail decays in $t$ by utilizing the oscillation in the integral.  Such bounds are interesting, but are not needed for our purpose here.

\subsection{Failure of the dispersive bound for the full evolution}

We now prove that the full evolution cannot satisfy the dispersive bound, and consequently that the wave operators are not bounded on $L^\infty$.  The proof follows that in \cite{GV} for the case of $m=1$, though the frequency localization allows us to avoid many technical issues such as regularity of the threshold energies or considering larger classes of potentials.  Namely we show the following.

\begin{prop}\label{prop:nondisp}
	
	Suppose that $n>3$ and $n>4m-1$, $\psi$ is a smooth cut-off to $\lambda \approx 1$ and $0\leq \alpha<\frac{n+1}{2}-2m$.  There cannot exist a bound of the form
	$$
		\sup_{t>0, L>0} t^{\frac{n}2}\| H^{\frac{n(m-1)}{2m}} e^{itH} \psi(H/L^{2m})  P_{ac}(H)f\|_\infty \leq C(V)  \|f\|_1
	$$
	with $C(V)<\infty$ for all $V\in C^{\alpha}(B(0,2))$.
	
\end{prop}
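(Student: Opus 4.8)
The plan is to argue by contradiction, reducing the full evolution to the first Born term (which Section~\ref{sec:first} already handles) and then invoking the uniform boundedness principle. Suppose the bound holds, so that for every $V\in C^{\alpha}(B(0,2))$ there is a finite $C(V)$ with $\sup_{t>0,L>0}t^{n/2}\|H^{\frac{n(m-1)}{2m}}e^{itH}\psi(H/L^{2m})P_{ac}(H)\|_{L^1\to L^\infty}\le C(V)$. Fix the integer $M=\ell_1+\ell_2$ supplied by Proposition~\ref{prop:hi tail} (it depends only on $n,m$). For $L$ beyond the threshold of Lemma~\ref{lem:eval} there are no eigenvalues of $H$ on $\mathrm{supp}\,\psi(\cdot/L^{2m})$, so $\mR_V^{\pm}$ is defined there, the Born identity \eqref{eqn:born identity} may be inserted into the modified Stone's formula, and each resulting piece has a genuine integral kernel. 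Evaluating at the critical time $t_L=L^{-(2m-1)}$ and pairing against the $L^1$‑normalized bumps $f_L$ of Section~\ref{sec:first} gives
\begin{align*}
b_L(V)&:=t_L^{n/2}\big\langle H^{\frac{n(m-1)}{2m}}e^{it_LH}\psi(H/L^{2m})P_{ac}(H)f_L,\,f_L\big\rangle\\
&=b_L^{(0)}+\sum_{j=1}^{2M-1}b_L^{(j)}(V)+b_L^{\mathrm{tail}}(V),
\end{align*}
where $b_L^{(0)}$ comes from the free ($j=0$) term $H_0^{\frac{n(m-1)}{2m}}e^{it_LH_0}\psi(H_0/L^{2m})$, $b_L^{(j)}(V)$ from the $j$‑th Born term $\mR_0(-V\mR_0)^j$, and $b_L^{\mathrm{tail}}(V)$ from the remainder. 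Since $\|f_L\|_1=1$, the assumed bound gives $\sup_L|b_L(V)|\le C(V)<\infty$; Corollary~\ref{cor:free} (with $p=1$) gives $|b_L^{(0)}|\les1$ uniformly in $L$; and Proposition~\ref{prop:hi tail} gives $|b_L^{\mathrm{tail}}(V)|\les_V t_L^{n/2}=L^{n/2-mn}\to0$. Hence $\sup_{L}\big|\sum_{j=1}^{2M-1}b_L^{(j)}(V)\big|<\infty$ for every $V\in C^{\alpha}(B(0,2))$.

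The next step is to isolate the $j=1$ term, which is complicated only by the fact that the higher terms are not linear in $V$. Since the $j$‑th Born term is homogeneous of degree $j$ in the potential, $b_L^{(j)}(tV)=t^jb_L^{(j)}(V)$, so for fixed $V$ the map $t\mapsto P_L(tV):=\sum_{j=1}^{2M-1}b_L^{(j)}(tV)=\sum_{j=1}^{2M-1}t^jb_L^{(j)}(V)$ is a polynomial of degree at most $2M-1$ in $t$. Choosing $2M-1$ distinct nonzero reals $t_1,\dots,t_{2M-1}$, the matrix $\big[t_i^{\,j}\big]_{i,j}$ is invertible, so $b_L^{(1)}(V)=\sum_i c_i\,P_L(t_iV)$ with constants $c_i$ depending only on the $t_i$. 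Each $t_iV$ again lies in $C^{\alpha}(B(0,2))$, so the estimate above applied to $t_iV$ yields $\sup_L|P_L(t_iV)|<\infty$, and therefore $\sup_{L}|b_L^{(1)}(V)|<\infty$ for every $V\in C^{\alpha}(B(0,2))$. (The expansion is justified only for $L$ past the threshold of Lemma~\ref{lem:eval}, but the remaining bounded range of $L$ contributes only a finite quantity.)

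Finally, $b_L^{(1)}$ equals, up to a fixed nonzero constant, the linear functional $a_{1,L}$ of Section~\ref{sec:first}, which for each fixed $L$ is bounded on $C^{\alpha}(B(0,2))$. The uniform boundedness principle then produces $C_\alpha<\infty$ with $|a_{1,L}(V)|\le C_\alpha\|V\|_{C^{\alpha}(B(0,2))}$ for all $L,V$. This contradicts Section~\ref{sec:first}: for $V_L(z)=\cos\!\big(c_mL|z|^{\frac{2m}{2m-1}}\big)\rho_\delta(z)$ one has
$$
\frac{|a_{1,L}(V_L)|}{\|V_L\|_{C^{\alpha}(B(0,2))}}\gtrsim L^{\frac{n+1}{2}-2m-\alpha}\,\delta\longrightarrow\infty\qquad(L\to\infty),
$$
since $\alpha<\tfrac{n+1}{2}-2m$. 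Hence no finite $C(V)$ can work for all $V\in C^{\alpha}(B(0,2))$, which proves the proposition.

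The only delicate point is the setup: one must verify that the Born expansion and the limiting absorption principle for $\mR_V^{\pm}$ are legitimate on the band $\lambda\approx L^{2m}$, which is why the argument is restricted to large $L$; this is costless because the counterexample is obtained by letting $L\to\infty$. The polarization/Vandermonde step is what makes the intermediate Born terms $j=2,\dots,2M-1$ irrelevant — they are not required to obey any dispersive estimate, only their homogeneity in $V$ is used — so the genuine analytic work remains entirely in the earlier sections (the stationary‑phase computation for the $j=1$ term and the boundedness of the tail).
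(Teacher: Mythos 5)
Your proof is correct and follows essentially the same route as the paper: Born expansion, boundedness of the free term and the tail, polynomial interpolation in the coupling constant to isolate the $j=1$ coefficient, and then the uniform boundedness principle against the oscillating potentials $V_L$ from Section~\ref{sec:first}. The only cosmetic difference is that you exploit homogeneity via a Vandermonde system at $2M-1$ nonzero scalar multiples $t_iV$, whereas the paper writes $V=\theta W$ and evaluates the degree-$2M$ polynomial (including the free term as the constant coefficient) at $2M+1$ points of $[0,1]$.
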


\begin{proof}
	
	Assume such a bound can hold.  Write $V=\theta W$ with $\theta\in [0,1]$ and $W\in C^{\alpha}(B(0,2))$.  The assumed dispersive bound implies the following holds uniformly in $L>C_V$ where $C_V$ is the constant in the proof of Proposition~\ref{prop:hi tail} chosen to ensure there are no embedded eigenvalues on the support of the cut-off.  Taking $L>C_V$ suffices to ensure the argument for the tail holds for any choice of $0\leq \theta \leq 1$.
	\begin{multline*}
		\sup_{0<t<1, L>C_V} t^{\frac{n}{2}}|\la  H^{\frac{n(m-1)}{2m}} e^{itH} \psi(H/L^{2m})  P_{ac}(H)f, g\ra | \\
		= \sup_{0<t<1, L>C_V} \frac{m \,t^{\frac{n}{2}}}{\pi } \bigg|\int_0^\infty e^{it\lambda^{2m}} \lambda^{2m-1+n(m-1)} \psi(\lambda^{2m}/L^{2m}) \la [\mR_{\theta W}^+(\lambda^{2m})-\mR_{\theta W}^-(\lambda^{2m})]f,g\ra \, d\lambda 		\bigg|\\
		\leq C(\theta, W)  \|f\|_1\|g\|_1.
	\end{multline*}
	We take both $f$ and $g$ to be the function $f_L$ defined above \eqref{UBBalpha}.  Expanding the perturbed resolvent into a Born series as in \eqref{eqn:born identity} allows us to express the evolution as a sum a polynomial in $\theta$ of degree $2M$ and the tail of the Born series.  The coefficients of this polynomial depend on $t$, $W$, and $L$, are defined by
	\begin{multline}
		a_k(W,L)=\frac{m\, t^{\frac{n}{2}} }{\pi i} \int_0^\infty e^{it\lambda^{2m}} \lambda^{2m-1+n(m-1)} \psi(\lambda^{2m}/L^{2m})\\ \la [\mR_{0}^+(\lambda^{2m})[W\mR_0^+(\lambda^{2m})]^k-\mR_{0}^-(\lambda^{2m})[W\mR_0^-(\lambda^{2m})]^k]f_L,f_L\ra \, d\lambda 	
	\end{multline}
	
	Proposition~\ref{prop:hi tail} shows that the tail obeys the desired bound provided $L$ is sufficiently large.  It follows that the Born series terms, the polynomial in $\theta$, must also obey the bound as well.  Writing
	$$
		P_{L,t}(\theta)=\sum_{k=0}^{2M}   a_k(W,L)\theta^k
	$$
	By assumption, $\sup_{0<t<1, L>C_V} |P_{L,t}(\theta)|$ is finite for each $0\leq \theta\leq 1$.  Hence the maximum $0\leq j\leq 2M$ of $\sup_{0<t<1, L>C_V}|P_{L,t}(\frac{j}{2M})|$ is bounded.  The value of the polynomial at $2M+1$ points suffices to solve for the values of the coefficients, and allows us to conclude that each of the coefficients is bounded.  Hence, we conclude that
	$$
		\sup_{0<t<1, L>C_V}t^{\frac{n}{2}}|a_1(W,L)|\leq C(W)<\infty.
	$$
	This, however, is false in light of the estimate in subsection~\ref{sec:first}.  Hence, the assumption is false and such a bound cannot hold.

\end{proof}

	The intertwining identity \eqref{eq:intertwining} quickly establishes the following corollary, which is the $p=\infty$ statement in Theorem~\ref{thm:main}.

\begin{corollary}
	
	Suppose that $n>3$ and $n>2m$. Then for any $\alpha<\frac{n+1}{2}-2m$, there exists a real-valued compactly supported potential $V$ in $C^{\alpha}(\R^n)$   for which the wave operators $W^\pm$ do not extend to bounded operators on $L^\infty(\R^n)$.

\end{corollary}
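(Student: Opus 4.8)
The plan is to obtain the corollary from Proposition~\ref{prop:nondisp} by reading the intertwining identity \eqref{eq:intertwining} backwards, i.e.\ by reversing the implication recorded just above \eqref{Hpsi}. One may first dispose of trivial cases: if $\alpha<0$ or $\frac{n+1}2-2m\le0$ there is nothing to prove, so assume $0\le\alpha<\frac{n+1}2-2m$; in particular $\frac{n+1}2-2m>0$ forces $n>4m-1$, which combined with the standing hypothesis $n>3$ makes Proposition~\ref{prop:nondisp} applicable. I would then argue by contradiction: suppose that for \emph{every} real-valued $V\in C^\alpha(B(0,2))$ the wave operators $W_\pm$ for $H=(-\Delta)^m+V$ extend to bounded operators on $L^\infty(\R^n)$.

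The first step is to pass to the adjoints. Since $W_\pm$ is bounded on $L^2$ by the cited $L^2$-theory and, by the contradiction hypothesis, on $L^\infty$, a duality argument with respect to the $L^2$ pairing shows that $W_\pm^*$ is bounded on $L^1$ with norm at most $\|W_\pm\|_{L^\infty\to L^\infty}$: for $g\in L^1\cap L^2$ and $f\in L^\infty$ supported on a fixed compact set with $\|f\|_\infty\le1$ one has $|\la W_\pm^*g,f\ra|=|\la g,W_\pm f\ra|\le\|W_\pm\|_{L^\infty\to L^\infty}\|g\|_1$; taking the supremum over such $f$ and using monotone convergence along an exhaustion by compact sets yields $\|W_\pm^*g\|_1\le\|W_\pm\|_{L^\infty\to L^\infty}\|g\|_1$, and density of $L^1\cap L^2$ in $L^1$ finishes this step. (Alternatively, one may invoke the fact that $W_\pm^*$ is itself the strong limit $\lim_{t\to\pm\infty}e^{itH_0}e^{-itH}P_{ac}(H)$, whose $L^p$-mapping properties mirror those of $W_\pm$.)

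Next, apply \eqref{eq:intertwining} with the bounded Borel function $f(s)=s^{\frac{n(m-1)}{2m}}e^{its}\psi(s/L^{2m})$ to get, for each $t>0$ and $L>0$,
\[
H^{\frac{n(m-1)}{2m}}e^{itH}\psi(H/L^{2m})P_{ac}(H)=W_\pm\,\big(H_0^{\frac{n(m-1)}{2m}}e^{itH_0}\psi(H_0/L^{2m})\big)\,W_\pm^{*}.
\]
Corollary~\ref{cor:free} (taken at $p=1$, so that the weight is $H_0^{\frac{n(m-1)}{2m}}$ and the decay is $|t|^{-n/2}$) shows the central factor maps $L^1\to L^\infty$ with norm $\les|t|^{-n/2}$, uniformly in $L$; composing on the right with $W_\pm^*\colon L^1\to L^1$ and on the left with $W_\pm\colon L^\infty\to L^\infty$ then gives, for every real-valued $V\in C^\alpha(B(0,2))$,
\[
\sup_{t>0,\,L>0}\,t^{n/2}\,\big\|H^{\frac{n(m-1)}{2m}}e^{itH}\psi(H/L^{2m})P_{ac}(H)\big\|_{L^1\to L^\infty}\le C(V)<\infty,
\]
which is exactly the bound \eqref{supHpsi} that Proposition~\ref{prop:nondisp} forbids. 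This contradiction shows the hypothesis was false: there must be a real-valued $V\in C^\alpha(B(0,2))$, hence a real-valued, compactly supported $V\in C^\alpha(\R^n)$, for which $W_\pm$ does not extend to a bounded operator on $L^\infty(\R^n)$, which is the assertion.

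I do not expect a genuine obstacle here. All of the analytic difficulty has already been absorbed into Corollary~\ref{cor:free} and Proposition~\ref{prop:nondisp}, and what remains is a soft deduction combining the intertwining identity with duality. The only step demanding a little attention is the passage from boundedness of $W_\pm$ on $L^\infty$ to boundedness of $W_\pm^*$ on $L^1$, which is not automatic since $L^1$ is not reflexive; this is handled by the testing argument above, or by appealing to the known symmetry between $W_\pm$ and its adjoint wave operator.
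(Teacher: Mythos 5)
Your proposal is correct and follows essentially the same route as the paper, which deduces the corollary in one line from the intertwining identity \eqref{eq:intertwining}, Corollary~\ref{cor:free}, and Proposition~\ref{prop:nondisp} (the implication you reverse is exactly the one the paper records just above \eqref{Hpsi}). You merely fill in the duality step giving $L^1$-boundedness of $W_\pm^*$, which the paper leaves implicit.
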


\section{Extension to $L^p$}\label{sec:ext to p}

In this section we complete the proof of Theorem~\ref{thm:main} to consider the full range of $p$ on which our counterexample to $L^p$ boundedness applies.
The argument presented in the previous section can be adapted to show the failure of dispersive bounds from $L^p$ to $L^{p'}$ for $1\leq p\leq 2$, which implies unboundedness of the wave operators on $L^{p'}$.  With $\psi$ the same cut-off to frequencies $\lambda \approx 1$, by Corollary~\ref{cor:free} the free evolution satisfies the bound
$$
	\left\|H_0^{\frac{n(m-1)}{2m}(\frac{2}{p}-1)}e^{itH_0}\psi(H_0/L^{2m})\right\|_{p\to p'} \les |t|^{\frac{n}{2}(1-\frac{2}{p})}.
$$
Here we show that the dispersive bound for the perturbed operator fails by showing that one cannot have a bound of the form
\begin{align}\label{eqn:Lp disp}
	\sup_{L>0,t>0}t^{\frac{n}{2}(1-\frac{2}{p})} \bigg|\left\la H^{\frac{n(m-1)}{2m}(\frac{2}{p}-1)}e^{itH}\psi(H/L^{2m})f, g \right \ra \bigg| \leq C(V)
\end{align}
for $f,g$ unit vectors in $L^p$.  As in the previous section, we'll select $f=g$ to be functions that concentrate at zero as $L\to \infty$.  Namely, we'll use $f_{L,p}(x)=(CL)^{\frac{n}p} f(x(CL))$ where $f$ is the $L^p$-normalized characteristice function of $B(0,1)$.  The argument proceeds analogously, we show that the first term of the Born series grows too fast as $t\to 0^{+}$ and the tail remains bounded.  

For the first term of the Born series, we note that we now consider the difference of kernels
\begin{multline*}
	K^{\pm}_{L,p,t}(x,y)\\
	= \int_0^\infty \int_{\R^n} e^{it\lambda^{2m}} \lambda^{2m-1} \lambda^{n(m-1)(\frac{2}{p}-1)} \mR_0^\pm(\lambda^{2m})(x,z) V(z) 
	\mR_0^\pm(\lambda^{2m})(z,y) \psi(\lambda^{2m}/L^{2m}) dz\, d\lambda .
\end{multline*} 
The oscillatory integral argument applies ver batim, there are three changes.  Due to the different power of $\lambda$, the $\lambda \mapsto \lambda L$ rescaling produces a power of $n(m-1)(\frac{2}{p}-1)$ on $L$.  The time factor in front of the integral $t^{\frac{n}{2}(1-\frac{2}{p})}$ becomes $L^{nm-\frac{n}{2}+\frac{n}p-\frac{2mn}p}$ using the scaling $t=L^{-(2m-1)}$.  The final change comes from the fact that the positive functions $f_{L,p}$ are integrated directly, we have
$\|f_{L,p}\|_1=(CL)^{n/p-n}\|f\|_1\approx L^{n/p-n}\|f\|_p$ since $f$ is a normalized characteristic function of $B(0,1)$.  

Hence we consider the linear operator 
$$a_{1,p,L}(V)= L^{nm-\frac{n}{2}+\frac{n}p-\frac{2mn}p} \int_{\R^n\times\R^n} K_{L,L^{-(2m-1)}}(x,y) f_{L,p}(x)f_{L,p}(y) dx dy.
$$
With the rescaling accounted for, the stationary phase arguments apply directly to show that there is a $C_p\in \mathbb C$ so that
$$
\Re \big[C_p K_{L,p}^+(x,y)-C_p K_{L,p}^-(x,y)\big] \gtrsim  L^{\frac{n}{p}+\frac{1-n}{2}-2m}\delta,
$$
for $\delta\ll 1$ fixed, and for $L$ sufficiently large and $|x|,|y|\ll \frac1L$.  Therefore, we have 
$$\frac{|a_{1,p,L}(V_L)|}{\|V_L\|_{C^\alpha(B(0,2))}}\gtrsim L^{\frac{n}{p}+\frac{1-n}{2}-2m-\alpha }\delta\to \infty
$$
as $L\to\infty$ unless $\alpha\geq \frac{n}{p}+\frac{1-n}{2}-2m$, provided $p<\frac{2n}{4m+n-1}$ to ensure the right side of the inequality is positive.  

For the tail of the Born series, the extension is straight-forward.
By selecting $M$ appropriately large depending on the parameters $j,k$ below, the proof of Proposition~\ref{prop:hi tail} can be adapted to show
\begin{align*}
	\sup_{x,y\in\R^n} \bigg|\int_0^\infty e^{it\lambda}\psi(\lambda/L^{2m}) \lambda^{k}(\mR_0^\pm(\lambda) V)^M \mR_V^\pm (\lambda) (V\mR_0^\pm(\lambda))^M(x,y) \, d\lambda \bigg| \les L^{-j}, 
\end{align*}
for any choice of $j,k\in \mathbb N$.  Again using the $L^p$-normalized  function $f_{L,p}$ we have
$\|f_{L,p}\|_1\approx L^{n/p-n}\|f\|_p$.  Hence, we have
\begin{multline*}
	\bigg|\int_0^\infty e^{it\lambda}\psi(\lambda/L^{2m}) \lambda^{k}\la (\mR_0^\pm(\lambda) V)^M \mR_V^\pm (\lambda) (V\mR_0^\pm(\lambda))^M(x,y)f_{L,p}, f_{L,p}\ra \, d\lambda \bigg|\\  
	\les \sup_{x,y\in\R^n} \bigg|\int_0^\infty e^{it\lambda}\psi(\lambda/L^{2m}) \lambda^{k}(\mR_0^\pm(\lambda) V)^M \mR_V^\pm (\lambda) (V\mR_0^\pm(\lambda))^M(x,y) \, d\lambda \bigg| \|f_{L,p}\|_1^2
	 \les L^{-1}.
\end{multline*}
uniformly in $L>C_{n,m,V}$, where $C_{n,m,V}$ is the constant chosen so that there are no embedded eigenvalues on the support of the cut-off.

The polynomial argument now applies with coefficients defined by
\begin{multline}
	a_k(W,L,p)=\frac{m\, t^{\frac{n}{2} (\frac{2}{p}-1)} } {\pi i} \int_0^\infty e^{it\lambda^{2m}} \lambda^{2m-1+n(m-1)(\frac{2}{p}-1) } \psi(\lambda^{2m}/L^{2m})\\ \la [\mR_{0}^+(\lambda^{2m})[-W\mR_0^+(\lambda^{2m})]^k-\mR_{0}^-(\lambda^{2m})[-W\mR_0^-(\lambda^{2m})]^k]f_{L,p},f_{L,p}\ra \, d\lambda .	
\end{multline}
Hence, we are able to conclude that the $L^p\to L^{p'}$ dispersive bound in \eqref{eqn:Lp disp} fails to hold for all $V\in C^{\alpha}(B(0,2))$ if $0\leq \alpha< \frac{n}{p}+\frac{1-n}{2}-2m$.  By appealing to the intertwining identity \eqref{eq:intertwining}, this shows that the wave operators are unbounded on  $L^q$ for $\frac{2n}{n-4m+1}< q\leq \infty$.

A consequence of these bounds are results that are, to the best of the authors' knowledge, new for the classical $m=1$ Schr\"odinger evolution.  Namely, with $p'$ the H\"older conjugate of $p$ we have the following.

\begin{corollary}\label{cor:m=1}
	
	Suppose that $n>3$ and   $1\leq p<\frac{2n}{n+3}$. Then for any $0\leq \alpha< \frac{n}{p}-\frac{n+3}{2}$ there exists a real-valued compactly supported potential $V$ in $C^{\alpha}(\R^n)$  for which the dispersive bound  
	$$
		\bigg\| e^{it(-\Delta+V)}P_{ac}(-\Delta+V)\bigg\|_{p\to p'} \les |t|^{-\frac{n}{2}(\frac2p-1)}
	$$
	fails. 

In particular, the wave operators $W^\pm$ do not extend to bounded operators on $L^{p'}(\R^n)$.  Consequently, we conclude that for all $\frac{2n}{n-3}<q\leq \infty$ there exist a compactly supported continuous potential for which the wave operators are unbounded on $L^q(\R^n)$.

\end{corollary}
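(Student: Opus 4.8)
The plan is to treat Corollary~\ref{cor:m=1} as the $m=1$ specialization of Sections~\ref{sec:fail}--\ref{sec:ext to p} and then to remove the frequency localization. When $m=1$ the exponent $\frac{n(m-1)}{2m}$ vanishes, so $H^{\frac{n(m-1)}{2m}(\frac{2}{p}-1)}$ in \eqref{eqn:Lp disp} is the identity and the estimate reduces to the classical bound $\|e^{itH}P_{ac}(H)\|_{p\to p'}\les|t|^{-\frac{n}{2}(\frac{2}{p}-1)}$; Proposition~\ref{prop:free} reduces to the unconditional free dispersive estimate for $-\Delta$; and the resonant potential of Section~\ref{sec:first} becomes the explicit chirp $V_L(z)=\cos(L|z|^2)\,\rho_\delta(z)$, since $\frac{2m}{2m-1}=2$ and $c_1=-1$. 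First I would record the index bookkeeping with $q=p'$: $n>4m-1$ is $n>3$, $\frac{2n}{n-4m+1}=\frac{2n}{n-3}$, and $\frac{n+1}{2}-2m-\frac{n}{p}=\frac{n}{p}-\frac{n+3}{2}=\frac{n-3}{2}-\frac{n}{q}$, whose positivity is precisely $p<\frac{2n}{n+3}$ (equivalently $q>\frac{2n}{n-3}$) and which also forces $n>3$, so the hypotheses match those needed to run the argument.

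Granting these identifications, Sections~\ref{sec:Tail Bdd hi} and \ref{sec:ext to p} applied at $m=1$ already show that the frequency-localized bound \eqref{eqn:Lp disp} cannot hold for all $V\in C^{\alpha}(B(0,2))$ once $0\le\alpha<\frac{n}{p}-\frac{n+3}{2}$: the Born tail is $O(L^{-1})$ by Proposition~\ref{prop:hi tail} (which applies since $V$ is bounded with compact support and, by Lemma~\ref{lem:eval}, $H$ has no eigenvalue on the support of $\psi(\cdot/L^2)$ for $L$ large), while the first Born term on $f_{L,p}$ at time $t=L^{-1}$ is $\gtrsim L^{\frac{n}{p}-\frac{n+3}{2}-\alpha}\delta\to\infty$ by the stationary-phase computation of Section~\ref{sec:first}, the passage to the full evolution being the polynomial-in-$\theta$ argument of Proposition~\ref{prop:nondisp}. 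Since boundedness of $W^\pm$ on $L^{p'}$ would, via the intertwining identity \eqref{eq:intertwining} and the free bound of Corollary~\ref{cor:free}, force \eqref{eqn:Lp disp} to hold uniformly in $L$, this already gives the last two clauses: there is a compactly supported $V\in C^{\alpha}$ for which $W^\pm$ is unbounded on $L^{p'}$, and taking $p=q'$ with $0<\alpha<\frac{n-3}{2}-\frac{n}{q}$ (possible because this range is nonempty) produces a compactly supported H\"older-continuous --- hence continuous --- potential for which $W^\pm$ is unbounded on $L^q$ for each $q\in(\frac{2n}{n-3},\infty]$.

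The remaining, and genuinely new, point is to upgrade the failure of the localized estimate to the failure of the unlocalized bound $\|e^{itH}P_{ac}(H)\|_{p\to p'}\les|t|^{-\frac{n}{2}(\frac{2}{p}-1)}$ stated in the corollary. I would do this by repeating the Born-series analysis directly on $\langle e^{itH}P_{ac}(H)f_{L,p},f_{L,p}\rangle$ without the cutoff, in the manner of \cite{GV}: expand by Stone's formula and \eqref{eqn:born identity}, and note that in the $j=1$ term the phase $t\lambda^2\pm\lambda R$, with $R\approx2$ and $t=L^{-1}$, has its only critical point at $\lambda\approx L$, so the band $\lambda\approx L$ reproduces exactly the contribution estimated in Section~\ref{sec:first} and still exceeds $c\,L^{\frac{n}{2}(\frac{2}{p}-1)}$ in the definite complex direction $C_p$. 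One then checks the rest cannot cancel it: the $j=0$ term is $\langle e^{-it\Delta}f_{L,p},f_{L,p}\rangle=O(|t|^{-\frac{n}{2}(\frac{2}{p}-1)})$ by the free estimate; the ranges $\lambda\ll L$ (nonstationary phase, with $\widehat{f_{L,p}}$ essentially the constant $\|f_{L,p}\|_1$, so integration by parts against the resolvent asymptotics of Section~\ref{sec:first} suffices) and $\lambda\gg L$ (Schwartz tail of $\widehat{f_{L,p}}$) contribute $O(|t|^{-\frac{n}{2}(\frac{2}{p}-1)})$; and the higher Born terms and tail are $O(1)$ by the cutoff-free version of Proposition~\ref{prop:hi tail}, which is available because the bounds of Theorem~\ref{thm:lap} decay in $\lambda$ at both $0$ and $\infty$ and $V$ is compactly supported, as the remark after that proposition notes. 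Then the unlocalized bound fails for some such $V$, and $W^\pm$ unbounded on $L^{p'}$ follows again by the contrapositive of the intertwining argument. The hard part will be exactly this de-localization: carrying out the low-frequency estimate for the first term and the full Born-tail estimate uniformly in $L$ without the convenient restriction on the spectral variable; the alignment of the critical point at $\lambda\approx L$ with the definite phase $C_p$ is what keeps the leading band dominant, but converting this into a clean contradiction with the unlocalized estimate is the one ingredient not already present in Sections~\ref{sec:fail}--\ref{sec:ext to p}.
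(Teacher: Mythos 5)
Your first two paragraphs track the paper's argument exactly: the $m=1$ specialization, the index bookkeeping, and the deduction of the failure of the localized bound \eqref{eqn:Lp disp} together with the unboundedness of $W^\pm$ on $L^{p'}$ (hence on $L^q$ for $q>\frac{2n}{n-3}$) via the intertwining identity are all correct and are what the paper does. The divergence is in the final step, and there you have a genuine gap. The paper does \emph{not} de-localize by redoing the Born-series analysis without the cutoff; it uses a two-line contrapositive: by Theorem~2.1 of Jensen--Nakamura \cite{JenNak}, the spectral multipliers $\psi(H/L^2)$ are bounded on $L^p$ \emph{uniformly in $L$}, so if the unlocalized bound $\|e^{itH}P_{ac}(H)\|_{p\to p'}\les |t|^{-\frac{n}{2}(\frac2p-1)}$ held, then composing with $\psi(H/L^2)$ (which commutes with $e^{itH}P_{ac}(H)$) would yield \eqref{eqn:Lp disp} uniformly in $L$ --- contradicting what was already established. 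This reduces the corollary to the work already done in Sections~3--4.

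Your proposed alternative --- expanding $\langle e^{itH}P_{ac}(H)f_{L,p},f_{L,p}\rangle$ with no frequency cutoff and checking that the bands $\lambda\ll L$ and $\lambda\gg L$ plus the un-localized Born tail cannot cancel the resonant contribution at $\lambda\approx L$ --- is not carried out, and you acknowledge as much. It is not a routine verification: for $\lambda\lesssim 1$ the representation $\mR_0^\pm(\lambda^2)(x,z)=e^{\pm i\lambda|x-z|}\lambda^{n-2}\omega^\pm(\lambda|x-z|)$ used in Section~\ref{sec:first} is only valid for $\lambda|x-z|>1$, the behavior of $\mR_V^\pm$ near the threshold depends on zero-energy resonances and eigenvalues, and the tail estimate of Proposition~\ref{prop:hi tail} as proved gives only boundedness uniform in $L$ on the support of the cutoff, not a cutoff-free bound with decay in $t$. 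These are precisely the ``technical issues such as regularity of the threshold energies'' that the paper's frequency localization was introduced to avoid, and they are why the original argument of \cite{GV} is considerably longer. So as written, the decisive step of your proof is missing; the fix is simply to invoke the uniform $L^p$-boundedness of $\psi(H/L^2)$ rather than to remove the localization.
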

\begin{proof}
Assuming that the $L^p\to L^{p\prime}$  bound in the corollary holds and using Theorem 2.1 in \cite{JenNak} for the uniform $L^p$ boundedness of $\psi(H/L)$ we conclude that the bound \eqref{eqn:Lp disp} should hold, which is a contradiction.
\end{proof}

\end{document}